\newtheorem{theorem}{Theorem}[section]
\newtheorem{corollary}[theorem]{Corollary}
\newtheorem{proposition}[theorem]{Proposition}
\newtheorem{lemma}[theorem]{Lemma}
\theoremstyle{definition}
\newtheorem{remark}[theorem]{Remark}
\def\Q{\mathbb Q}
\def\PP{\mathcal P}
\newcommand{\CC}{{\mathbb C}}
\newcommand{\NN}{{\mathbb N}}
\newcommand{\RR}{{\mathbb R}}
\newcommand{\ZZ}{{\mathbb Z}}
\begin{document}

\title{Ellipsoid embeddings and symplectic packing stability}

\author{O. Buse \and R. Hind}

\date{\today}

\maketitle

\begin{abstract}
We prove packing stability for any closed symplectic manifold with rational cohomology class. This will rely on a general symplectic embedding result for ellipsoids which assumes only that there is no volume obstruction and that the domain is sufficiently thin relative to the target. We also obtain easily computable bounds for the Embedded Contact Homology capacities which are sufficient to imply the existence of some volume preserving embeddings in dimension $4$.
\end{abstract}

\section{Introduction}

We will study symplectic embeddings, that is, embeddings $f:(N,\sigma) \to (M, \omega)$ between symplectic manifolds such that $f^* \omega = \sigma$. As our domains will be ellipsoids or unions of balls in Euclidean space it is sometimes convenient to introduce the weaker notion $(N,\sigma) \hookrightarrow (M,\omega)$ to mean that $\lambda(N,\sigma) := (N,\lambda \sigma) \to (M,\omega)$ for all $0 < \lambda <1$. We note that when $M$ is a ball and $N$ is an ellipsoid or a disjoint union of balls in dimension $4$, then $(N,\sigma) \hookrightarrow (M,\omega)$ implies there exists a symplectic embedding $(\mathring{N},\sigma) \to (M,\omega)$ from the interior $\mathring{N}$ of $N$, see \cite{mcduff0}, \cite{mcduff1}, Corollary $1.6$. However no such results are known in higher dimension. In any case, we say that $(N,\sigma)$ {\it fully fills} $(M,\omega)$ if $\mathrm{vol}(N,\sigma)=\mathrm{vol}(M,\omega)$ and $(N,\sigma) \hookrightarrow (M,\omega)$.

Our main theorem is an embedding result for ellipsoids. To state this, we denote by $E(a_1, \dots, a_n) \subset \RR^{2n}$ the ellipsoid \[E(a_1, \dots, a_n)=\left\{\sum_{i=1}^n \frac{\pi(x_i^2+ y_i^2)}{a_i} \le 1\right\}.\]
Ellipsoids inherit a symplectic structure from the standard form $\omega_0 = \sum_{i=1}^n dx_i \wedge dy_i$ on $\RR^{2n}$. In our notation, the ball of capacity $c$ is written $B^{2n}(c)=E(c, \dots, c)$ and $\lambda E(a_1, \dots, a_n)= E(\lambda a_1, \dots, \lambda a_n)$. Any given ellipsoid is  symplectomorphic to the same ellipsoid with its factors permuted. Unless otherwise stated, when describing an ellipsoid we will list the factors in increasing order.

\begin{theorem} \label{main} There exists a constant $S(b_1, \dots ,b_n)$ such that if $\frac{a_n}{a_1}>S$ and $a_1 \dots a_n \le b_1 \dots b_n$ there exists an embedding $$E(a_1, \dots ,a_n) \hookrightarrow E(b_1, \dots ,b_n).$$
\end{theorem}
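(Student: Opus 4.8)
The plan is to induct on the dimension $n$, reducing the $2n$-dimensional problem to lower-dimensional embeddings together with a base case in dimension four. First I would normalize using the scaling $\lambda E(a_1,\dots,a_n)=E(\lambda a_1,\dots,\lambda a_n)$, so that we may assume $a_1=1$ and the thinness hypothesis reads $a_n>S$. In the thin regime the volume inequality $a_1\cdots a_n\le b_1\cdots b_n$ should be the only surviving constraint, so the whole point is to show that, for a fixed target, the embedding function of an ever-thinner family of ellipsoids converges to the volume lower bound, and that this convergence is fast enough that the bound is actually attained once the ratio $a_n/a_1$ exceeds a threshold depending only on $b_1,\dots,b_n$.

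For the base case $n=2$ I would invoke the sharp four-dimensional computations. By the results of McDuff and Schlenk, the smallest dilate of a fixed four-dimensional ellipsoid into which $E(1,x)$ embeds agrees with the volume bound $\sqrt{x}$ for all $x$ beyond an explicit constant; the analogous asymptotics for an arbitrary target $E(b_1,b_2)$ follow from the Embedded Contact Homology capacities, which are monotone under embeddings and whose values on thin ellipsoids are asymptotically governed by volume. This produces the constant $S(b_1,b_2)$ in dimension four.

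For the inductive step I would exploit the thinness geometrically. A very thin ellipsoid is elongated along its long axis, and the idea is to cut it transversally into a large number of slabs, each of which is, up to a small error, the product of a short two-dimensional disk with a lower-dimensional ellipsoid of controlled shape. The inductive hypothesis embeds the lower-dimensional factors, and a folding-and-stacking construction along the long direction reassembles the pieces inside the target. Thinness is exactly what makes this efficient: the larger the ratio $a_n/a_1$, the thinner each slab and the less volume is wasted in the product-to-ellipsoid comparison, so the total loss can be driven below any prescribed level, which is what is needed to meet the sharp volume hypothesis.

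The main obstacle is this inductive construction. The naive inclusion $E(a_1,\dots,a_n)\subset E(a_1,\dots,a_k)\times E(a_{k+1},\dots,a_n)$ loses a factor of $\binom{n}{k}$ in volume and is therefore far too wasteful to yield a volume-sharp statement; the difficulty is to recover this loss, which requires a genuine symplectic construction (folding, or an explicit fibered embedding) rather than a set-theoretic containment, together with careful bookkeeping so that the threshold $S(b_1,\dots,b_n)$ produced by the induction stays finite and depends only on the target. Controlling how the constant propagates from dimension $2(n-1)$ to dimension $2n$, and verifying that the slab decomposition can be realized symplectically with arbitrarily small error, is where essentially all the work lies.
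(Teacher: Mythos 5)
There is a genuine gap: your inductive step is a plan, not a proof. You correctly identify that the naive inclusion $E(a_1,\dots,a_n)\subset E(a_1,\dots,a_k)\times E(a_{k+1},\dots,a_n)$ wastes a factor of $\binom{n}{k}$ in volume and that some genuine symplectic construction is needed to recover it, but you then defer exactly that construction (``slab decomposition realized symplectically with arbitrarily small error'', ``folding-and-stacking'') to future work. That construction is the entire content of the theorem; without it nothing has been proved. It is also not clear that a slicing-transverse-to-the-long-axis scheme can be made volume-sharp at all: the slices of a thin ellipsoid are not products, and a folding construction typically loses a definite fraction of volume at each fold. The paper avoids this difficulty entirely and, notably, does \emph{not} induct on dimension. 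It stays in dimension $2n$ and uses Proposition 2.1 of Buse--Hind as a black box: a four-dimensional embedding $E(a,b)\hookrightarrow E(c,d)$ induces $E(a,b,a_3,\dots,a_n)\hookrightarrow E(c,d,a_3,\dots,a_n)$ for any choice of the remaining factors. Granting that, the higher-dimensional argument is pure bookkeeping on pairs of factors: first a sequence of moves (Lemma \ref{first}) arranges that consecutive ratios among $a_1,\dots,a_{n-1}$ are at most $20$ while $a_n$ remains enormous relative to each of them; then (Lemma \ref{snd}) one checks $b_k>2a_k$; finally one trades $a_k$ for $b_k$ one index at a time by pairing $a_k$ with the huge last factor and applying a thin four-dimensional embedding, the last factor absorbing the volume discrepancy at each step.

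Your base case also has a soft spot. For a general four-dimensional target $E(b_1,b_2)$ you appeal to McDuff--Schlenk (which covers only ball targets) and to the statement that ECH capacities of thin ellipsoids are ``asymptotically governed by volume.'' The asymptotic (Weyl-law) behaviour of $c_k$ as $k\to\infty$ does not yield an embedding: McDuff's theorem requires $\mathcal{N}(a_1,a_2)(k)\le\mathcal{N}(b_1,b_2)(k)$ for \emph{every} $k$, including the small and intermediate range where the behaviour is combinatorial. What is actually needed, and what the paper supplies in Proposition \ref{theparabolas}, is a pair of explicit, uniform quadratic bounds
\[
\frac{y^2}{2ab}+\frac{y}{2a}\ \le\ R_{a,b}(y)\ \le\ \frac{y^2}{2ab}+\frac{y}{2a}+\frac{y}{b}+\frac{b}{8a}+1
\]
valid for all $y>0$, from which the explicit threshold $\alpha\ge(5\beta+16)^2/(16\beta)$ of Theorem \ref{oneemb} follows. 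To repair your argument you would need either to prove such uniform bounds yourself or to establish the fibered suspension result of Buse--Hind; as written, both the base case and the inductive step rest on unproven assertions.
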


This will be established using a technique from Buse-Hind \cite{busehind} which generates higher dimensional embeddings from lower dimensional ones, together with some new ellipsoid embeddings in dimension $4$.

Combining with work of Biran \cite{bir0} and Opshtein \cite{ops2}, we also have a full filling result valid for general rational symplectic manifolds. Recall that the manifold $(M,\omega)$ is {\it rational} if $[\omega] \in H^2(M,\Q) \subset H^2(M,\RR)$.

\begin{theorem}\label{general2}
If $(M^{2n},\omega)$ (or $(M^{2n},\lambda \omega)$ for any $\lambda>0$) is rational, then  there exist a constant $S(M)>0$ such that for all ellipsoids $E(a_1, \dots ,a_n) $ with $\frac{a_n}{a_1}>S(M)$ we have a volume filling embedding  $E(a_1, \dots ,a_n) \hookrightarrow (M,\omega).$
\end{theorem}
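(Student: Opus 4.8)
The plan is to deduce Theorem~\ref{general2} from Theorem~\ref{main} by routing an arbitrary, sufficiently thin ellipsoid through a single fixed ellipsoid that already fully fills $M$, the existence of which is supplied by the work of Biran and Opshtein. First I would dispose of the scaling issue: the relation $\hookrightarrow$, the notion of a volume-filling embedding, and the hypotheses of Theorem~\ref{main} are all invariant under simultaneously rescaling $\omega \mapsto \lambda\omega$ and $E(a_1,\dots,a_n)\mapsto E(\lambda a_1,\dots,\lambda a_n)$. Hence it suffices to prove the statement when $[\omega]\in H^2(M,\Q)$; the variant in which only $[\lambda\omega]$ is rational then follows by carrying out the argument for $(M,\lambda\omega)$ and rescaling the resulting embeddings by $\lambda^{-1}$. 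So assume $[\omega]$ is rational.

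The geometric input, which I would state precisely and cite in the exact form needed, is the following consequence of \cite{bir0} and \cite{ops2}: a closed rational symplectic manifold $(M^{2n},\omega)$ is fully filled by a single ``model'' ellipsoid $E(b_1,\dots,b_n)$ whose factors are determined by the rational cohomology data of $(M,\omega)$ --- concretely, via a Donaldson-type symplectic hypersurface $\Sigma$ Poincar\'e dual to a large multiple of $[\omega]$ together with a full filling of the complement $M\setminus\Sigma$. In particular this gives $b_1\cdots b_n = n!\,\mathrm{vol}(M,\omega)$ and a full filling $E(b_1,\dots,b_n)\hookrightarrow(M,\omega)$. The entire content of this step lies in the cited papers.

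I would then set $S(M):=S(b_1,\dots,b_n)$, the constant furnished by Theorem~\ref{main} for the target $E(b_1,\dots,b_n)$, and conclude as follows. Let $E(a_1,\dots,a_n)$ be any ellipsoid for which a volume-filling embedding is claimed; then $\mathrm{vol}\,E(a_1,\dots,a_n)=\mathrm{vol}(M,\omega)$, i.e.\ $a_1\cdots a_n = n!\,\mathrm{vol}(M,\omega) = b_1\cdots b_n$, so the volume hypothesis $a_1\cdots a_n \le b_1\cdots b_n$ of Theorem~\ref{main} holds, with equality. If moreover $a_n/a_1 > S(M)=S(b_1,\dots,b_n)$, then Theorem~\ref{main} yields $E(a_1,\dots,a_n)\hookrightarrow E(b_1,\dots,b_n)$. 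Composing with the full filling $E(b_1,\dots,b_n)\hookrightarrow(M,\omega)$ produces $E(a_1,\dots,a_n)\hookrightarrow(M,\omega)$, and since $\mathrm{vol}\,E(a_1,\dots,a_n)=\mathrm{vol}(M,\omega)$ this is exactly the desired volume-filling embedding.

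The one point requiring care beyond the imported results is the transitivity of $\hookrightarrow$ used in the final composition. Given $\lambda<1$, choose $\lambda'\in(\lambda,1)$; an embedding of the closed ellipsoid $\lambda' E(a)$ into $E(b)$ carries the interior points $\lambda E(a)\subset\mathrm{int}\,\lambda'E(a)$ to interior points (invariance of domain), so the compact image of $\lambda E(a)$ lies in $\mathrm{int}\,E(b)=\bigcup_{\mu<1}\mu E(b)$, hence inside some $\mu E(b)$ with $\mu<1$, which in turn embeds into $M$; tracking the scalars gives $\lambda E(a)\to M$ for every $\lambda<1$. Thus the genuine difficulty of Theorem~\ref{general2} is concentrated entirely in the two imported ingredients --- Theorem~\ref{main} and the Biran--Opshtein model ellipsoid --- while the reduction itself is soft, its one substantive feature being that the model ellipsoid matches the volume of $M$ \emph{exactly}, so that the thinness ratio $a_n/a_1$ is the sole remaining condition.
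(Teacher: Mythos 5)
Your proposal is correct and follows exactly the route the paper takes: it obtains the model ellipsoid $E(b_1,\dots,b_n)$ fully filling $(M,\omega)$ from the Biran--Opshtein construction (Theorem \ref{sec4}), sets $S(M)=S(b_1,\dots,b_n)$, and composes the embedding from Theorem \ref{main} with the full filling. The paper simply records this as ``a direct application of Theorems \ref{sec4} and \ref{main}''; your additional remarks on rescaling and on the transitivity of $\hookrightarrow$ are sound and fill in details the paper leaves implicit.
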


In the paper \cite{hoferschlenklatsch}, section $3$, Cieliebak, Hofer, Schlenk and Latschev define generalized symplectic capacities defined in terms of a fixed symplectic manifold $(M,\omega)$. If $M$ has dimension $2n$ then we get a capacity on the space of $2n$ dimensional ellipsoids defined by

\[c^{(M,\omega)}E(a_1, \cdots, a_n):= \inf \{ c | E(a_1, \cdots, a_n) \hookrightarrow (M, c \omega) \}.\]

Our result can be interpreted as saying that $c^{(M,\omega)}E(a_1, \cdots, a_n)$ coincides with the normalized volume capacity

\[ v^{(M,\omega)}E(a_1, \cdots, a_n):= \sqrt[n]{\frac {{\rm vol}(E(a_1 \cdot\ldots\cdot a_n))}{{\rm vol}(M)}}\]
whenever $M$ is rational and $\frac{a_n}{a_1}>S(M)$, that is, whenever the ellipsoid is sufficiently thin.

The existence of an embedding between $4$-dimensional ellipsoids is determined by the Embedded Contact Homology (ECH) capacities of M. Hutchings, see \cite{hutch}. Namely, there is a sequence of numbers $\mathcal{N}(a,b)(k)$, $n \ge 0$, associated to a pair $(a,b)$ and according to D. McDuff, \cite{mcduff2}, we have $E(a_1,a_2) \hookrightarrow E(b_1,b_2)$ if and only if $\mathcal{N}(a_1,a_2)(k) \le \mathcal{N}(b_1,b_2)(k)$ for all $k \ge 0$. Although in principle this solves the problem of ellipsoid embeddings in dimension $4$, it remains a difficult question to determine if a particular embedding exists since the $\mathcal{N}(a,b)(k)$ are given by quite complicated combinatorial formulas, \cite{hutch}. We will give quadratic bounds on a related piecewise linear function, see Proposition \ref{theparabolas}. One consequence is the following.

\begin{theorem} \label{oneemb}
Let $\beta \geq 1$ and $\alpha \geq \frac{(5 \beta+16)^2}{16 \beta}$. Then there exists a full filling $E(1,\alpha)  \hookrightarrow \sqrt{\frac{\alpha}{\beta}} E(1,\beta)$.
\end{theorem}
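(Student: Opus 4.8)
The plan is to reduce the embedding to a family of ECH capacity inequalities via McDuff's criterion (quoted above) and then to control those inequalities with the quadratic bounds of Proposition~\ref{theparabolas}. First I would record that $\sqrt{\alpha/\beta}\,E(1,\beta)=E(\sqrt{\alpha/\beta},\sqrt{\alpha\beta})$ has volume $\tfrac{\alpha}{2}=\mathrm{vol}\,E(1,\alpha)$, so the statement really is a full filling and the two capacity sequences must agree asymptotically. Writing $\mu=\sqrt{\alpha/\beta}$ and using the scaling $\mathcal{N}(\lambda a,\lambda b)(k)=\lambda\,\mathcal{N}(a,b)(k)$, McDuff's theorem says the desired $\hookrightarrow$ holds if and only if $\mathcal{N}(1,\alpha)(k)\le \mu\,\mathcal{N}(1,\beta)(k)$ for every $k\ge 0$. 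I find it cleanest to dualize this into the counting functions $C_a(x):=\#\{(m,n)\in\ZZ_{\ge 0}^2: m+na\le x\}$: the family of inequalities over all $k$ is equivalent to $C_\alpha(x)\ge C_\beta(x/\mu)$ for all $x\ge 0$, and since $\tfrac1{2\beta}(x/\mu)^2=\tfrac{x^2}{2\alpha}$ the leading (volume) terms of the two sides cancel.

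The key observation is that $\sqrt{\alpha\beta}/\mu=\beta$, so the comparison splits naturally at $x_0=\sqrt{\alpha\beta}$. For $0\le x<x_0$ one has $x/\mu<\beta\le\alpha$, hence in both lattices only the row $n=0$ contributes, giving $C_\alpha(x)=\lfloor x\rfloor+1\ge\lfloor x/\mu\rfloor+1=C_\beta(x/\mu)$ because $\mu\ge1$; thus the inequality is automatic on this range. For $x\ge x_0$ I would invoke the quadratic bounds of Proposition~\ref{theparabolas}, in the elementary form $C_\alpha(x)>\tfrac{x^2}{2\alpha}+\tfrac{x}{2}$ together with $C_\beta(y)\le \tfrac{y^2}{2\beta}+\tfrac{y}{2}+\tfrac{y}{\beta}+\tfrac{\beta}{8}+1$. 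After the volume terms cancel, what remains is the single affine inequality $\tfrac{x}{2}\ge\tfrac{x/\mu}{2}+\tfrac{x/\mu}{\beta}+\tfrac{\beta}{8}+1$, whose slope $\tfrac12-\tfrac1\mu\cdot\tfrac{\beta+2}{2\beta}$ is positive, so it holds on $[x_0,\infty)$ exactly when it holds at $x_0$. Evaluating at $x=x_0$, where $x/\mu=\beta$, turns it into $\sqrt{\alpha\beta}\ge\tfrac{5\beta}{4}+4=\tfrac{5\beta+16}{4}$, i.e. precisely into the hypothesis $\alpha\ge\frac{(5\beta+16)^2}{16\beta}$.

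The main obstacle is exactly the placement of this splitting point. The parabola bounds on their own are too lossy near $x=0$: the error terms make the crude comparison of the lower bound for $C_\alpha$ against the upper bound for $C_\beta$ fail there, even though the true counts are equal, so one cannot simply apply Proposition~\ref{theparabolas} on all of $[0,\infty)$. The resolution is to notice that the target's second lattice row only switches on at $x_0=\sqrt{\alpha\beta}$, to dispatch $[0,x_0)$ by the direct monotonicity comparison above, and to reserve the quadratic bounds for $[x_0,\infty)$. Recovering the sharp constants $5$ and $16$ then hinges on using bounds for $C_\alpha$ and $C_\beta$ that are tight enough at the transition point $x_0$; this is what the explicit form of Proposition~\ref{theparabolas} is designed to provide, and checking that the affine inequality is saturated exactly at the stated threshold is the one computation that must be carried out carefully.
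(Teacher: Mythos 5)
Your proposal is correct and follows essentially the same route as the paper: McDuff's criterion reduced via Corollary \ref{corhut} to comparing lattice-point counting functions, a direct monotonicity argument on $[0,\sqrt{\alpha\beta})$ where only the $n=0$ row contributes, and the two quadratic bounds of Proposition \ref{theparabolas} on $[\sqrt{\alpha\beta},\infty)$, with the volume terms cancelling and the affine remainder checked at the transition point to recover the threshold $\frac{(5\beta+16)^2}{16\beta}$. The only (harmless) difference is that you dispatch the whole initial range in one step, whereas the paper splits it at $c\lfloor\beta\rfloor$ and handles the sliver $[c\lfloor\beta\rfloor,c\beta]$ by inspecting the graphs of $\mathcal{N}$.
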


%This crucial result in dimension four allows us to use our methods from \cite{busehind} and show the following result on ellipsoids on higher dimensions:
%\begin{theorem}\label{general} Define a constant $S$ by $S^{\frac{1}{n-1}}=2^{3n-4} \max_k 2^{4k(k-1)} \frac{b_1 \dots b_n}{b_k^n}$. Then if $\frac{a_n}{a_1}>S$ and $a_1 \dots a_n = b_1 \dots b_n$ there exists a volume preserving embedding $$E(a_1, \dots ,a_n) \hookrightarrow E(b_1, \dots ,b_n).$$
%\end{theorem}

%Basing his result  on Biran's polarization from \cite{} of a {\it rational} closed symplectic manifold $(M,\omega)$,(a manifold for which  $[\omega] \in H^2(M,\Q) \subset H^2(M,\RR)$), Opshtein \cite{opsh} showed that there exist a specific ellipsoid that fully fills the manifold $M$. This in turn allows us to state that

Finally we discuss packing stability. The
$k^{\text{th}}$ packing number of a compact, $2n$-dimensional, symplectic
manifold $(M, \omega)$ is
\[
  p_k (M,\omega) =
  \frac{\sup_c {\rm Vol}(\sqcup_k B(c))}{{\rm Vol}(M,\omega)},
\]
where the supremum is taken over all $c$ for which there exist a
symplectic embedding of $\sqcup_k B(c)$, the disjoint union of $k$ balls of capacity $c$, into $(M,\omega)$. Naturally,
$p_k(M,\omega) \leq 1$. The identity $p_k(M,\omega)=1$ is equivalent to saying that
$(M,\omega)$ admits a full filling by $k$ identical balls, otherwise we say that
there is a packing obstruction.

The symplectic manifold $(M,\omega)$ has {\it packing stability} if there exists an integer $N_{{\rm stab}}(M,\omega)$ such that $p_i(M,\omega)
= 1$ for all $i \geq N_{{\rm stab}}(M,\omega)$.

\begin{theorem} \label{stable} If $(M^{2n},\omega)$ (or $(M^{2n},\lambda \omega)$ for any $\lambda>0$) is rational, then it has packing stability.
\end{theorem}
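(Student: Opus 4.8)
The plan is to deduce packing stability (Theorem \ref{stable}) from the full-filling result for ellipsoids (Theorem \ref{general2}) combined with the $4$-dimensional packing-stability results for balls that are already known. The key observation is that a single thin ellipsoid can be decomposed into a large but controlled number of equal balls, so that a full filling by one ellipsoid can be converted into a full filling by many equal balls. Concretely, first I would invoke Theorem \ref{general2} to obtain, for the rational manifold $(M^{2n},\omega)$, a constant $S(M)$ and a volume-filling embedding $E(a_1,\dots,a_n) \hookrightarrow (M,\omega)$ for every ellipsoid whose eccentricity $a_n/a_1$ exceeds $S(M)$. Thus it suffices to show that a sufficiently thin ellipsoid of the correct volume is itself fully filled by $N$ equal balls for all large $N$, since composing the two embeddings then fully fills $(M,\omega)$ by those balls.

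The second step is the ellipsoid-by-balls decomposition. Here I would use the known fact (going back to the ``ball decomposition'' of an ellipsoid and McDuff's work on ellipsoid embeddings in dimension $4$, see \cite{mcduff1,mcduff2}) that an ellipsoid $E(a_1,a_2)$ with integer or rational ratio admits a full filling by an explicit collection of balls determined by the continued-fraction/weight expansion of $a_2/a_1$. More usefully for uniform packing, I would pick a thin ellipsoid $E(1,k)$ for a large integer $k$: this is fully filled by $k$ equal balls $B(1)$, essentially because $E(1,k)$ is symplectomorphic to a fibration whose fibres can be cut into $k$ standard pieces. Iterating or rescaling, for any target number $N$ of equal balls one arranges a thin ellipsoid of eccentricity $>S(M)$ that is fully filled by $N$ balls of a common capacity. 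Combining with Step one, this yields $p_N(M,\omega)=1$ for all $N$ past some threshold, which is exactly packing stability with $N_{\mathrm{stab}}(M,\omega)$ equal to that threshold.

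I would handle the reduction to dimension $4$ carefully, since the cleanest ellipsoid-into-equal-balls statements live there; in higher dimensions the product/suspension construction from Buse--Hind \cite{busehind} used to prove Theorem \ref{main} should propagate the equal-ball fillings from dimension $4$ upward, so I would route the $2n$-dimensional decomposition through that same mechanism rather than re-deriving it. The rationality hypothesis enters exactly as in Theorem \ref{general2}: it guarantees the finite constant $S(M)$ and the Biran--Opshtein \cite{bir0,ops2} input that allows a single thin ellipsoid to fill $(M,\omega)$, so no separate rationality argument is needed at the packing stage.

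The main obstacle I anticipate is making the ellipsoid-into-$N$-equal-balls step uniform and volume-exact simultaneously with the thinness constraint $a_n/a_1 > S(M)$. One must choose, for each large $N$, an ellipsoid that is both thin enough to satisfy Theorem \ref{general2} and admits a \emph{perfect} (full-volume) partition into $N$ \emph{identical} balls; reconciling the integer combinatorics of the ball-decomposition with the fixed threshold $S(M)$, and controlling possible volume loss so that the composite embedding is genuinely volume filling, is the delicate point. I expect this to require a short number-theoretic argument choosing the ellipsoid parameters (for instance scaling a fixed thin model and subdividing) so that equal balls of a single capacity tile it exactly, after which the two embeddings compose to give $p_N(M,\omega)=1$.
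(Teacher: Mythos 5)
Your proposal follows essentially the same route as the paper: fill $(M,\omega)$ by a thin ellipsoid using the Biran--Opshtein polarization result together with Theorem \ref{main}, then fill that ellipsoid by $k$ equal balls. The ``delicate'' uniformity issue you anticipate does not actually arise, since Lemma $4.1$ of \cite{busehind} gives a full filling $\sqcup_k B(1) \hookrightarrow E(1,\dots,1,k)$ directly in dimension $2n$ for \emph{every} integer $k$, so one simply takes the scaled ellipsoids $\gamma E(1,\dots,1,k)$ of the correct volume for all $k \ge S$ and composes.
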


This is due to P. Biran, \cite{Bi2}, in the case that $M$ is a closed four dimensional symplectic manifold with a rational cohomology class, and to Buse-Hind \cite{busehind} when $M$ is $\CC P^n$ with its Fubini-Study symplectic form.

Bounds on $N_{{\rm stab}}(M,\omega)$ can be derived from Theorem \ref{main}, although in specific examples we can obtain much sharper estimates.

Let $\CC P^n$ be equipped with its Fubini-Study symplectic form, and denote by $H^n_d$ a smooth hypersurface of degree $d$ in $\CC P^{n+1}$ with the induced symplectic form.

\begin{theorem} \label{bound}
(i) $N_{{\rm stab}}(\CC P^n) \le \lceil (8\frac{1}{36})^{\frac{n}{2}} \rceil$. \newline
(ii) $N_{{\rm stab}}(H^n_d) \le \lceil (\frac{25}{16}d+10d^{-\frac{(n-2)}{n}}+16d^{-\frac{2(n-1)}{n}})^{\frac{n}{2}} \rceil$.
\end{theorem}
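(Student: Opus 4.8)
The plan is to derive both bounds from a single reduction of packing stability to a thinness threshold, and then to make that threshold explicit for each family. The reduction rests on the packing fact that for every positive integer $i$ the ellipsoid $E(\underbrace{1,\dots,1}_{n-1},i)\subset\RR^{2n}$ is fully filled by $i$ equal balls: its volume is $i/n!$, matching $i$ copies of $B^{2n}(1)$, and in dimension four this is classical while in general it follows from the same explicit integral-affine decomposition. Rescaling by $\mu$ so that $\mathrm{vol}(\mu E(1,\dots,1,i))=\mathrm{vol}(M)$, any full filling $\mu E(1,\dots,1,i)\hookrightarrow M$ at once yields a full filling of $M$ by the $i$ equal balls $B^{2n}(\mu)$, so $p_i(M)=1$. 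Since $E(1,\dots,1,i)$ has thinness $a_n/a_1=i$, Theorem \ref{general2}, or equivalently Theorem \ref{main} applied to the relevant target, supplies such a volume filling as soon as $i>S(M)$. Hence $p_i(M)=1$ for all $i>S(M)$, and since the thresholds below are non-integral we obtain $N_{\mathrm{stab}}(M)\le\lceil S(M)\rceil$. It then suffices to bound $S(\CC P^n)$ and $S(H^n_d)$.

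For $\CC P^n$ I would use that the open ball $B^{2n}(1)$ fully fills $\CC P^n$, its complement being a hyperplane, so that $S(\CC P^n)$ is exactly the constant $S(1,\dots,1)$ controlling thin-ellipsoid embeddings into the ball in Theorem \ref{main}. To evaluate it I would run the Buse--Hind suspension of \cite{busehind}, which manufactures a full filling of $B^{2n}(1)$ by a thin ellipsoid out of a lower-dimensional full filling together with a four-dimensional building block, raising the complex dimension by two at each stage. The base datum is the sharp four-dimensional threshold: $E(1,a)$ fully fills $B^4(\sqrt a)$ precisely for $a\ge 8\frac{1}{36}=(17/6)^2$, which is also recovered from the quadratic estimates of Proposition \ref{theparabolas}. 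As each stage contributes this four-dimensional factor, the threshold in complex dimension $n$ is $S(\CC P^n)=\big(8\frac{1}{36}\big)^{n/2}=(17/6)^n$, and the ceiling gives (i). As a check, $n=2$ returns the optimal value $\lceil 8\frac{1}{36}\rceil=9$.

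For the hypersurface $H^n_d$, where $\mathrm{vol}(H^n_d)=d/n!$, I would first invoke the symplectic decompositions of Biran \cite{bir0} and Opshtein \cite{ops2} to reduce a full filling of $H^n_d$ to a full filling of a ball (or standard ellipsoid) whose capacity scales like $d^{1/n}$, together with a negligible skeleton. The same suspension scheme then applies, but with the four-dimensional building block now the genuinely elliptic embedding of Theorem \ref{oneemb}: taking $\beta=d$ its threshold is $\frac{(5d+16)^2}{16d}=\frac{25}{16}d+10+\frac{16}{d}$. Propagating this through the higher-dimensional construction, the $d^{1/n}$-scaling of the intermediate ellipsoid factors converts the constant and $1/d$ corrections into the fractional powers $d^{-(n-2)/n}$ and $d^{-2(n-1)/n}$, so that $S(H^n_d)=\big(\frac{25}{16}d+10\,d^{-(n-2)/n}+16\,d^{-2(n-1)/n}\big)^{n/2}$; the ceiling yields (ii), which at $n=2$ reduces to the undecorated four-dimensional threshold.

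The conceptual skeleton---reduce to a thinness threshold, then build high-dimensional fillings from four-dimensional ones---is straightforward, and I expect the main obstacle to be the bookkeeping of the explicit constants through the suspension. This is most delicate in (ii): one must track precisely how the Biran--Opshtein reduction depends on $d$ and how the $d^{1/n}$-scaling of the auxiliary factors feeds into Theorem \ref{oneemb}, since it is exactly this interaction that produces the fractional exponents. Obtaining these powers exactly, rather than settling for a cruder polynomial-in-$d$ bound, is the point requiring care.
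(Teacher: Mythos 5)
Your overall strategy is the same as the paper's: reduce packing stability to embedding the thin ellipsoid $E(1,\dots,1,k)$ into the filling domain ($B^{2n}(1)$ for $\CC P^n$, and the ellipsoid $E(1,\dots,1,d)$ for $H^n_d$, obtained inductively from Opshtein's Lemma \ref{opss}), and then build that embedding as a chain of $n-1$ suspensions of four-dimensional embeddings via Proposition \ref{two}. However, the step where you actually produce the numerical thresholds has a genuine gap. Your claim that ``each stage contributes this four-dimensional factor, [so] the threshold in complex dimension $n$ is $(8\tfrac{1}{36})^{n/2}$'' is not the correct mechanism and cannot be: the sharp McDuff--Schlenk value $8\tfrac{1}{36}$ applies only when the \emph{target} is a ball, which happens only at the very last stage of the chain, $E(1,k^{2/n})\hookrightarrow B(k^{1/n})$. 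That single stage is what forces $k^{2/n}\ge 8\tfrac{1}{36}$, i.e.\ $k\ge(8\tfrac{1}{36})^{n/2}$; the earlier $n-2$ stages are ellipsoid-to-ellipsoid and are governed only by the non-sharp Theorem \ref{oneemb}, yielding the separate conditions
\begin{equation*}
\frac{25}{16}k^{-2/n}+10\,k^{-(n-i)/n}+16\,k^{-2(n-i-1)/n}\le 1,\qquad 0\le i\le n-3,
\end{equation*}
which are \emph{not} implied by your per-stage heuristic and must be checked (one observes the left side is increasing in $i$ and verifies the worst case under $k^{2/n}\ge 8\tfrac{1}{36}$). Without this verification the bound in (i) is not established, and if one of these intermediate conditions were binding the answer would not be $(8\tfrac{1}{36})^{n/2}$ at all.

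The same issue recurs in (ii), where you defer exactly the computation that constitutes the proof. There the filling domain really is the specific ellipsoid $E(1,\dots,1,d)$ (from $H^1_d$ filled by $B^2(d)$ plus induction through the polarization $H^{n}_d\subset H^{n+1}_d$), not a ball of capacity $d^{1/n}$, and every one of the $n-1$ stages uses Theorem \ref{oneemb}; the stated exponents $d^{-(n-2)/n}$ and $d^{-2(n-1)/n}$ come from evaluating the stage-$i$ condition at $i=n-2$ after checking monotonicity in $i$. Your intuition that the $d^{1/n}$-scaling produces these fractional powers is right, but as written the proposal asserts the final formulas rather than deriving them, and in (i) it attributes them to a mechanism that does not hold.
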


Of course, statement $(i)$ here is just a refined estimate of statement $(ii)$ valid for hyperplanes and it improves the previous bounds provided in \cite{busehind}.

One can also consider packing with other symplectic manifolds, typically replacing the ball by other open subsets of $(\RR ^{2n}, \omega_0)$. Fixing a symplectic $(D,\omega_0)$ of dimension $2n$ we define the $k^{\text{th}}$ packing number with respect to $D$ of a $2n$-dimensional symplectic
manifold $(M, \omega)$ by
\[
  p^D_k (M,\omega) =
  \frac{\sup_c {\rm Vol}(\sqcup_k (D,c\omega_0))}{{\rm Vol}(M,\omega)},
\]
where the supremum is taken over all $c$ for which there exist a
symplectic embedding of $\sqcup_k (D,c\omega_0)$ into $(M,\omega)$. Similarly to the above, we say that $(M,\omega)$ has packing stability with respect to $(D,\omega_0)$ if $p^D_k(M,\omega)=1$ for all $k$ sufficiently large. Taking $D$ to be an ellipsoid, we can generalize Theorem \ref{stable} as follows.

\begin{theorem} \label{stable2} If $(M^{2n},\omega)$ (or $(M^{2n},\lambda \omega)$ for any $\lambda>0$) is rational, then it has packing stability with respect to any symplectic ellipsoid.
\end{theorem}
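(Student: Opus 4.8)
The plan is to pack $k$ equal copies of the model ellipsoid directly into a single thin ellipsoid which in turn fully fills $M$, so that the construction produces \emph{exactly} $k$ pieces for every large $k$. This meets the definition of packing stability head‑on and sidesteps the divisibility problem that a ``fill $M$ by balls, then subdivide each ball into copies of $D$'' argument would run into. Since $p^D_k$ is scale invariant, I may assume $(M,\omega)$ itself is rational. Write the model as $D=E(w_1,\dots,w_n)$ with $w_1\le\dots\le w_n$, and for each integer $k$ set
\[
F_k := E(w_1,\dots,w_{n-1},\,k\,w_n),
\]
an ellipsoid of volume $k\cdot\mathrm{vol}(D)$ whose ratio of largest to smallest factor is at least $k\,w_n/w_1$.

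First I would dispose of the target. As $k\to\infty$ the ellipsoid $F_k$ becomes arbitrarily thin, so once $k>S(M)\,w_1/w_n$ its ratio exceeds $S(M)$ and Theorem \ref{general2} applies: choosing $c'>0$ with $\mathrm{vol}(c'F_k)=\mathrm{vol}(M)$ yields a volume filling embedding $c'F_k\hookrightarrow(M,\omega)$. It therefore suffices to produce, for all large $k$, a full filling
\[
\bigsqcup_{i=1}^{k} D \ \hookrightarrow\ F_k .
\]
Composing with $c'F_k\hookrightarrow(M,\omega)$ (and absorbing the domain rescalings, which is routine for the relation $\hookrightarrow$) packs $k$ equal copies $c'D$ into $M$ whose total volume is $k(c')^n\mathrm{vol}(D)=\mathrm{vol}(c'F_k)=\mathrm{vol}(M)$; taking the $\lambda<1$ scalings implicit in $\hookrightarrow$ gives volume ratios $\lambda^n\to1$, so $p^D_k(M,\omega)=1$ as required.

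The heart of the matter is thus the decomposition statement that $k$ equal copies of $D$ fully fill the ellipsoid $F_k$ obtained by stretching the last factor by the integer $k$. In dimension four this is an ECH computation: by McDuff's criterion \cite{mcduff2} one checks that the ECH capacities of $\bigsqcup_k D$ (given by the usual maximum over partitioned sums) are dominated by those of $F_k=E(w_1,k w_2)$ in every degree; because the volumes already agree, only this capacity inequality must be verified, and I would carry it out using the combinatorial description of the $\mathcal N$'s exploited for Proposition \ref{theparabolas} and Theorem \ref{oneemb}. The cleanest base point is $w_1=w_2$, where the assertion is the classical full filling $\bigsqcup_k B^4\hookrightarrow E(1,\dots,1,k)$, and the general ratio goes through in the same spirit.

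The main obstacle, and the step I expect to demand the real work, is the higher‑dimensional decomposition $\bigsqcup_k D\hookrightarrow F_k$, where the ECH criterion is unavailable. Here I would invoke the dimension‑raising (``suspension'') technique of Buse–Hind \cite{busehind} that already underlies Theorem \ref{main}: the long last factor $k\,w_n$ supplies exactly the room needed to string $k$ disjoint copies of $D$ along the final coordinate, reducing the $2n$‑dimensional packing to the four‑dimensional case established above. The delicate points are to keep the copies disjoint while losing no volume in the limit, and to make the construction uniform in $k$ so that it holds for \emph{every} sufficiently large $k$ rather than along a sparse subsequence; these are precisely the features the mechanism of \cite{busehind} is designed to control.
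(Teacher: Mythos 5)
Your overall architecture is exactly the paper's: reduce to a full filling $\bigsqcup_k D\hookrightarrow F_k=E(w_1,\dots,w_{n-1},kw_n)$, note that $F_k$ becomes arbitrarily thin, and then compose with the volume filling embedding of the thin ellipsoid into $(M,\omega)$ coming from Theorem \ref{main} together with Opshtein's Theorem \ref{sec4} (i.e.\ Theorem \ref{general2}). That outer layer of your argument is fine.

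The gap is the step you yourself flag as ``the heart of the matter'': the full filling $\bigsqcup_k D\hookrightarrow F_k$ is never actually established, and the mechanism you propose for it in dimension $>4$ does not apply. The Buse--Hind suspension result (Proposition \ref{two}) upgrades a single four-dimensional ellipsoid embedding $E(a,b)\hookrightarrow E(c,d)$ to a $2n$-dimensional one; it says nothing about domains that are \emph{disjoint unions}, so it cannot ``string $k$ disjoint copies of $D$ along the final coordinate'' as you describe. Likewise the four-dimensional ECH verification for $\bigsqcup_k E(w_1,w_2)\hookrightarrow E(w_1,kw_2)$ with general weight ratio is a nontrivial combinatorial computation (the capacities of a disjoint union involve a maximum over partitions) that you only gesture at. The paper avoids both issues with an elementary, dimension-independent construction: by Lemma $5.3.1$ of Schlenk \cite{Sh1}, each copy of $D=E(w_1,\dots,w_n)$ embeds (in the $\hookrightarrow$ sense) into the block $T=\{0\le x_i,\ 0\le y_i\le 1,\ \sum_i x_i/w_i\le 1\}$; stacking $k$ such blocks in the $y_n$-direction yields a region $U$ which, by the second part of the same lemma, embeds into $E(w_1,\dots,w_{n-1},kw_n)$, and both steps lose no volume. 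This is uniform in $k$ and works in all dimensions at once. Without this (or some substitute), your proof of the key decomposition is incomplete.
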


We know of no manifolds which have packing stability with respect to other domains in $\RR^{2n}$, for example polydisks. 

{\bf Outline of the paper.}

In section $2$ we discuss the ECH capacities, deriving our quadratic bounds and in particular proving Theorem \ref{oneemb}. Section $3$ gives the proof of Theorem \ref{main}. In section $4$ we describe how to obtain Theorem \ref{general2} and packing stability, Theorems \ref{stable} and \ref{stable2}, from Theorem \ref{main} together with a construction of E. Opshtein, \cite{opshtein}. In section $5$ we compute some examples and prove Theorem \ref{bound}.

{\bf Acknowledgements.}

The first author would like to thank Dusa McDuff for suggesting Opshtein's paper as a potential starting point towards adressing the packing problems in higher dimension, and  Michal Misiurewicz and Rodrigo P{\'e}rez for several fruitful discussions.
\section{ Ellipsoid embeddings in dimension four}

In this section we consider the embedding capacity function

\[f_{\beta}(\alpha) := \inf\{c|E(1,\alpha) \hookrightarrow c E(1,\beta)\}\]
which is a natural extension of the function $f_1$ considered by McDuff and Schlenk in the  paper \cite{mcdsch}, see Theorem $1.1.2$.

We will show that $f_{\beta}(\alpha)$ coincides with the normalized volume capacity of the ellipsoid $E(1,\beta)$ for sufficiently large values of $\alpha$.

\subsection{ Description of ECH  and some new estimates}

The key ingredients for our study are results of Hutchings \cite{hutch} and McDuff \cite{mcduff2} on Embedded Contact Homology which together give necessary and sufficient conditions for a four dimensional ellipsoid embedding.

A four dimensional Liouville domain is a compact exact symplectic manifold $(X, \omega)$ such that there exists a contact form on $\partial X$ which is a primitive of $\omega|_{\partial X}$. Hutchings associates to such $(X,\omega)$ an increasing sequence of real numbers $c_k(X,\omega)$ for $k \ge 0$ called the ECH capacities. The term capacity is justified by the following.

%into each other by means of their Embedded Contact Capacities.
%To briefly explain their origin, consider $X, \omega$ to be a compact exact four dimensional manifold with a contact type boundary $(\partial X, \lambda).$
%Building on work of Hutchings-Taubes \cite{}, Hutchings defines in \cite{hutch} a filtered version  $ECH^L(\partial X, \lambda, \Gamma)$, for some $\gamma \in H_1(\partial X)$ and $L>0.$ Using the filtered embedded contact homologies for the given contact boundary, he goes on and introduces a full ECH spectrum of $(\partial X, \lambda)$  that is represented by a sequence of real numbers $c_k(\partial X, \lambda)$. They will also be refered to as the ECH capacities  $c_k(X, \omega)$ of the Liouville domain $(X, \omega)$ as they satisfy the the following

\begin{theorem}(Hutchings \cite{hutch}) \label{echemb}
Let $(X, \omega)$  and $(X', \omega')$ be two Liouville domains as above.
If there exist a symplectic embedding $\phi: (X, \omega) \longrightarrow ( \mathring{X}', \omega')$ then $c_k(X, \omega) \leq c_k(X', \omega')$ for all integers $k>0$.
\end{theorem}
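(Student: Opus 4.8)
Since this is Hutchings' monotonicity (``capacity'') property for the ECH capacities, the plan is not to reprove the analytic foundations but to recall how the $c_k$ are assembled from filtered ECH and then to invoke the functoriality of ECH under exact symplectic cobordisms. First I would set up the spectral description. Write $(Y_+,\lambda_+)=(\partial X',\lambda')$ and $(Y_-,\lambda_-)=(\partial X,\lambda)$ for the contact boundaries, where the contact forms are the restrictions of primitives of $\omega'$ and $\omega$ guaranteed by the Liouville condition. The homology $ECH(Y,\lambda)$ is generated by admissible Reeb orbit sets and carries a symplectic action filtration, yielding subgroups $ECH^L(Y,\lambda)$ together with the natural maps $\iota^L \colon ECH^L(Y,\lambda) \to ECH(Y,\lambda)$, a degree--lowering $U$--map, and a distinguished sequence of classes $\sigma_0,\sigma_1,\dots$ generated from the contact invariant $\sigma_0$ by $U$. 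By definition $c_k(X,\omega)=\inf\{L : \sigma_k \in \operatorname{Im}\iota^L\}$, so the whole statement reduces to comparing the action levels at which the distinguished classes of $Y_-$ and $Y_+$ first become represented.

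Next I would convert the embedding into a cobordism. Since $\phi(X)\subset \mathring{X}'$, the complement $W = X' \setminus \phi(\mathring{X})$ is a compact symplectic manifold with $\partial_+ W = \partial X'$ and $\partial_- W = \phi(\partial X)$. Using that both $X$ and $X'$ are Liouville, so their Liouville vector fields are outward--pointing transverse to the boundaries, one arranges after a collar rescaling that $W$ is an \emph{exact} symplectic cobordism from $(Y_+,\lambda_+)$ to $(Y_-,\lambda_-)$.

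Then I would apply the cobordism maps. By the work of Hutchings and Hutchings--Taubes, an exact symplectic cobordism $W$ induces a map $\Phi(W)\colon ECH(Y_+)\to ECH(Y_-)$ that (i) commutes with the $U$--maps, (ii) sends the contact invariant of $Y_+$ to that of $Y_-$, and hence, since the $\sigma_k$ are built from $\sigma_0$ by $U$ acting as an isomorphism on the relevant summands, carries $\sigma_k^+$ to $\sigma_k^-$; and (iii) is refined by filtered maps $\Phi^L$ with $\iota_-^L\circ\Phi^L = \Phi(W)\circ\iota_+^L$, i.e.\ it is action non--increasing. Property (iii) is the geometric reflection of the fact that holomorphic curves in the completed cobordism have nonnegative energy, so the total action of the positive asymptotics dominates that of the negative ones. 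Combining these: for $L=c_k(X')+\epsilon$ choose $\alpha\in ECH^L(Y_+)$ with $\iota_+^L\alpha=\sigma_k^+$; then $\Phi^L\alpha \in ECH^L(Y_-)$ satisfies $\iota_-^L\Phi^L\alpha = \Phi(W)\sigma_k^+ = \sigma_k^-$, so $\sigma_k^-\in\operatorname{Im}\iota_-^L$ and $c_k(X)\le c_k(X')+\epsilon$. Letting $\epsilon\to 0$ gives $c_k(X,\omega)\le c_k(X',\omega')$.

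The genuinely hard part, which is exactly what is cited to Hutchings and Hutchings--Taubes, is the construction of $\Phi(W)$ and the verification of properties (i)--(iii): these pass through Taubes' isomorphism between ECH and Seiberg--Witten Floer cohomology and the analysis of the Seiberg--Witten equations on the cobordism $W$, which is the analytic core of the theory. By contrast, the passage from the embedding to an exact cobordism and the final spectral bookkeeping are formal once that machinery is in hand.
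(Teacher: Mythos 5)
The paper does not prove this statement at all: it is quoted verbatim from Hutchings' \emph{Quantitative embedded contact homology} and used as a black box, so there is no internal proof to compare against. Judged on its own terms, your sketch is a faithful outline of Hutchings' actual argument: define $c_k$ spectrally via filtered ECH and the $U$--map acting on classes related to the contact invariant, turn the embedding into a symplectic cobordism $W = X'\setminus\phi(\mathring X)$, and invoke the Hutchings--Taubes cobordism maps (built through Seiberg--Witten Floer theory), which respect the action filtration, the $U$--map, and the contact invariant. Two points deserve more care. First, the cobordism you produce is in general only \emph{weakly} exact: $\omega'|_W$ is exact, but a primitive need not restrict to the prescribed contact forms on both boundary components (on $\partial_- W$ it can differ from $\phi_*\lambda$ by a closed one--form), and no collar rescaling fixes this in general; Hutchings states his monotonicity theorem for weakly exact cobordisms precisely to accommodate this, so the conclusion survives but your reduction step as written is too optimistic. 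Second, $c_k$ is defined as a minimum of spectral numbers over \emph{all} classes $\sigma$ with $U^k\sigma$ equal to the contact invariant, not via a single distinguished sequence $\sigma_0,\sigma_1,\dots$; the bookkeeping at the end should therefore quantify over such classes, though this does not change the structure of the argument. With those two adjustments your proposal is the standard proof.
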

In the same paper, Hutchings proceeds to compute the capacities of ellipsoids $E(a,b)$:
\begin{proposition}
 Given $0 <a \leq b$ consider  the sequence
    $\mathcal { N}(a,b)(k)$  obtained by arranging in increasing order,
    with repetitions, all the numbers of the type $a \ell+b p$ with $\ell,p$
    any nonnegative integers. Then $c_k(E(a,b))=  \mathcal { N}(a,b)(k-1) $

    \end{proposition}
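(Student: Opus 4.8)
The plan is to follow the ECH computation of Hutchings directly, first reducing to nondegenerate Reeb dynamics and then matching the homological grading to the symplectic action by a lattice-point count. The first step is to reduce to the case where $a/b$ is irrational: the ellipsoid $E(a,b)$ is star-shaped, hence a Liouville domain, and both $c_k(E(a,b))$ and $\mathcal{N}(a,b)(k-1)$ are monotone and continuous in $(a,b)$ — monotonicity of $c_k$ being exactly the inclusion property of Theorem \ref{echemb} applied to nested ellipsoids — so it suffices to prove the formula on the dense set of irrational ratios and pass to the limit. For irrational $a/b$ the standard Liouville form restricts to a nondegenerate contact form on $\partial E(a,b)$ whose Reeb flow has exactly two simple closed orbits $e_1,e_2$, lying in the two coordinate $2$-planes, both elliptic, with symplectic actions $a$ and $b$. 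Since there are no hyperbolic orbits, every admissible ECH generator is an orbit set $e_1^m e_2^n$ with arbitrary multiplicities, so the generators are in bijection with lattice points $(m,n)\in\mathbb{Z}_{\ge 0}^2$, the generator $e_1^m e_2^n$ carrying action $am+bn$.

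The next step is to pin down the grading and show the differential vanishes. Using the elliptic Conley--Zehnder formula $CZ(e_i^k)=2\lfloor k\theta_i\rfloor+1$ together with the relative first Chern and relative self-intersection terms in Hutchings's ECH index formula, I would compute the index $I(e_1^m e_2^n)$ and verify two facts: that it is always even, and that as $(m,n)$ ranges over $\mathbb{Z}_{\ge 0}^2$ the values of $I$ are exactly $0,2,4,\dots$, each attained once. Evenness alone forces the ECH differential to vanish, since it lowers the index by one and there are no generators in odd degree; hence the chain groups equal their homology, recovering the expected $\bigoplus_{k\ge 0}\mathbb{Z}$ structure of $ECH_*(S^3)$ with the degree $-2$ map $U$ an isomorphism in each positive grading.

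The heart of the argument, and the step I expect to be the main obstacle, is the purely combinatorial claim that the ordering of generators by ECH index coincides with their ordering by action: the unique generator of index $2m$ is precisely the one whose action $am+bn$ is the $(m{+}1)$-st smallest value in the list, namely $\mathcal{N}(a,b)(m)$. Concretely one aims to prove
\[ I(e_1^m e_2^n)=2\,\#\{(i,j)\in\mathbb{Z}_{\ge 0}^2 : ai+bj \le am+bn\}-2, \]
so that the index records exactly the rank of the action; this is where the delicate bookkeeping of lattice points in the triangle $\{x,y\ge 0,\ ax+by\le am+bn\}$ enters, and where irrationality of $a/b$ is used to guarantee that all actions are distinct and the ranking is unambiguous.

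Finally I would read off the capacities from filtered ECH. By definition $c_k$ is the infimum of the action levels $L$ for which the class hit by $U^{-(k-1)}$ of the empty-set class — equivalently the generator of index $2(k-1)$ — already lies in the image of $ECH^L\to ECH$. Because $\partial=0$ and there is a single generator in each even grading, that generator becomes available exactly at its own action level, so $c_k(E(a,b))$ equals the action of the index-$2(k-1)$ generator, which by the previous step is $\mathcal{N}(a,b)(k-1)$; the shift by one is purely the bookkeeping convention relating the capacity index to the $0$-indexed sequence $\mathcal{N}$. Passing back from irrational to arbitrary $(a,b)$ by the continuity and monotonicity reduction then completes the proof, the content being that the ECH capacities of $E(a,b)$ are exactly the increasingly sorted values of $\{am+bn\}$.
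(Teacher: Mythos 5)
The paper gives no proof of this proposition---it is quoted directly from Hutchings \cite{hutch}---and your outline is a faithful sketch of Hutchings's own computation: perturb to irrational $a/b$, identify the generators with lattice points via the two elliptic orbits, observe the differential vanishes for parity reasons, prove the index-equals-action-rank identity $I(e_1^m e_2^n)=2\#\{(i,j):ai+bj\le am+bn\}-2$, and read off the capacities from filtered ECH. The approach is correct and matches the cited source, with the understanding that the combinatorial index identity you flag as the main obstacle is indeed the substantive step still to be carried out.
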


Following this, McDuff showed that the necessary condition for ellipsoid embeddings coming from Theorem \ref{echemb} is also sufficient:

\begin{theorem} (McDuff \cite{mcduff2})\label{mcduffECH} There exists a symplectic embedding
$\;\;\mathring{E}(a,b) \longrightarrow E(a',b')$ if and only if  $\mathcal{N}(a,b)(k) \leq \mathcal{N}(a',b')(k)$ for all natural numbers $k$.
\end{theorem}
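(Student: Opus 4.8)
I would split the statement into its two directions. The necessity of the inequalities $\mathcal{N}(a,b)(k) \le \mathcal{N}(a',b')(k)$ is immediate and requires no new work: a symplectic embedding $\mathring E(a,b) \to E(a',b')$ forces $c_k(E(a,b)) \le c_k(E(a',b'))$ for all $k$ by Hutchings' monotonicity (Theorem \ref{echemb}), and since the preceding Proposition identifies $c_k(E(a,b)) = \mathcal{N}(a,b)(k-1)$, the claimed inequalities follow at once. All of the difficulty lies in the sufficiency direction, where one must manufacture an embedding out of purely numerical data.

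For sufficiency my plan is to reduce the ellipsoid embedding problem to a ball packing problem. First, by the monotonicity and continuity of both the embedding capacity and the functions $\mathcal{N}$ under scaling, together with a density argument, it suffices to treat the case in which the ratios $b/a$ and $b'/a'$ are rational. For a rational ellipsoid one has the McDuff--Polterovich ball decomposition: there is a finite weight sequence $W(a,b) = (w_1, \dots, w_M)$, read off from the continued-fraction expansion of $b/a$, such that $\mathring E(a,b)$ is filled by the disjoint union $\sqcup_i B(w_i)$ in a reversible way, meaning that for any target $Z$ one has $\mathring E(a,b) \to Z$ if and only if $\sqcup_i B(w_i) \hookrightarrow Z$. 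Applying this with $Z = E(a',b')$ converts the question into deciding when a specific disjoint union of balls embeds into the target ellipsoid.

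The resulting ball packing problem is then attacked through the symplectic blow-up correspondence. Embedding $\sqcup_i B(w_i)$ into $E(a',b')$---after also expanding the target, so that everything takes place inside a ball or a rational ruled surface---is equivalent to the existence of a symplectic form in a prescribed cohomology class on an iterated blow-up. By the now-standard description of the symplectic cone of such $4$-manifolds, via Taubes--Seiberg--Witten theory and the work of Li--Li and McDuff, such a form exists precisely when the candidate class has positive volume and pairs nonnegatively with every exceptional sphere class. This replaces the geometric question by an explicit, though infinite, family of numerical inequalities indexed by the exceptional classes.

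The main obstacle, and the genuinely original part of the argument, is to show that this family of exceptional-class inequalities is equivalent to the full list of ECH inequalities $\mathcal{N}(a,b)(k) \le \mathcal{N}(a',b')(k)$. I would prove this by a careful combinatorial analysis of the counting functions: the value $\mathcal{N}(a,b)(k)$ is a lattice-point count for the numbers $a\ell + bp$, and its behaviour under the weight expansion should match, class by class, the intersection-theoretic data of the exceptional spheres appearing in the blow-up. Establishing this dictionary---so that nonnegative pairing with each exceptional class corresponds exactly to one ECH inequality, and conversely---is where the real content lies; once it is in place, the theorem follows by assembling the structural results quoted above.
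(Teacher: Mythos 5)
First, a point of comparison: the paper does not prove Theorem \ref{mcduffECH} at all --- it is imported from McDuff \cite{mcduff2} and used as a black box, so there is no internal argument to measure your proposal against. Judged on its own terms, your outline correctly reproduces the architecture of the actual proof in the literature: necessity from Hutchings' monotonicity (Theorem \ref{echemb}) together with the identification $c_k(E(a,b))=\mathcal{N}(a,b)(k-1)$; reduction to rational data; the weight decomposition converting $\mathring{E}(a,b)\to E(a',b')$ into a ball packing problem (note that this ``reversibility'' is itself a substantial theorem, proved in \cite{mcduff1}, not a formal consequence of writing down the weight sequence); the translation of ball packing into positivity of a cohomology class against exceptional divisors via the Taubes--Seiberg--Witten/Li--Li/McDuff description of the symplectic cone of blow-ups; and finally a combinatorial equivalence between that family of inequalities and the ECH inequalities.

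As a proof, however, the proposal stops exactly where you concede ``the real content lies.'' The dictionary between exceptional classes and ECH inequalities is not a routine matching of lattice-point counts against intersection data; it is the main theorem of \cite{mcduff2}, and establishing it requires, among other things, identifying $\mathcal{N}(a,b)$ with the ECH capacities of the disjoint union $\sqcup_i B(w_i)$ arising from the weight expansion, and then a nontrivial argument (via reductions of exceptional classes) showing that each constraint $da'+eb'\ge \sum_i m_i w_i$ coming from an exceptional class is actually implied by an ECH inequality at a suitable index, with additional care about strict versus non-strict inequalities to pass between open and closed ellipsoids. You assert that this correspondence ``should match, class by class,'' but give no mechanism for producing it, and without it the sufficiency direction is not established. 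So the proposal should be read as an accurate map of the known proof rather than a proof: every genuinely difficult step is either delegated to a quoted structural theorem or left as an unexecuted combinatorial claim.
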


Comparing Theorems \ref{echemb} and \ref{mcduffECH} we remark that $\mathring{E}(a,b) \to E(a',b')$ if and only if $E(a,b) \to \mathring{E}(\lambda a',\lambda b')$ for all $\lambda>1$, see \cite{mcduff2} Remark 1.3.

We will refer to $\mathcal{N} (a,b)(x)$ as being the piecewise linear function built by joining by line segments  the points of $\mathcal{N}(a,b)(k)$. It is clearly sufficient to compare two such piecewise functions and moreover, if one defines, for any $y \geq 0$

\begin{equation}
R(a,b)(y)= \sup\{k | \mathcal{N}(a,b)(k) \leq y\} \end{equation}
then we have
\begin{corollary}\label{corhut}(see Hutchings \cite{hutch}, Bauer \cite{bauer}) $E(a,b) \hookrightarrow E(a',b')$
if and only if
\begin{equation}
 \label{rineq}
  R(a,b)(y) \geq R(a',b')(y)
\end{equation}
 for all $y>0$.
\end{corollary}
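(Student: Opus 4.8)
The plan is to split the statement into two independent ingredients. The first is to recognize that the relation $E(a,b) \hookrightarrow E(a',b')$ is, after unwinding the definition of $\hookrightarrow$ and applying McDuff's Theorem \ref{mcduffECH}, exactly the ECH inequality $\mathcal{N}(a,b)(k) \le \mathcal{N}(a',b')(k)$ for all $k$. The second is a purely order-theoretic observation: since $R(a,b)(y) = \sup\{k \mid \mathcal{N}(a,b)(k) \le y\}$ is by construction the generalized inverse of the nondecreasing sequence $k \mapsto \mathcal{N}(a,b)(k)$, passing to $R$ reverses the pointwise comparison. Combining the two gives the corollary.

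For the first ingredient, I would unwind $\hookrightarrow$ as follows. By definition $E(a,b) \hookrightarrow E(a',b')$ means $E(\lambda a,\lambda b) \to E(a',b')$ for every $0<\lambda<1$. Conjugating such an embedding by the conformal linear map $z \mapsto \lambda^{-1/2}z$ shows this is equivalent to $E(a,b) \to \mu E(a',b')$ for every $\mu = \lambda^{-1} > 1$. Because an embedding into the closed ellipsoid $\mu E(a',b')$ can, for any $1<\mu'<\mu$, be produced from one into $E(\mu' a',\mu' b') \subset \mathring{E}(\mu a',\mu b')$, the condition ``$E(a,b) \to \mu E(a',b')$ for all $\mu>1$'' coincides with ``$E(a,b) \to \mathring{E}(\mu a',\mu b')$ for all $\mu>1$'', which by the Remark following Theorem \ref{mcduffECH} is equivalent to $\mathring{E}(a,b) \to E(a',b')$, hence by Theorem \ref{mcduffECH} to $\mathcal{N}(a,b)(k) \le \mathcal{N}(a',b')(k)$ for all $k$.

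For the second ingredient, write $\mathcal{N}_1 = \mathcal{N}(a,b)$, $\mathcal{N}_2 = \mathcal{N}(a',b')$ and $R_i$ for the associated functions; both sequences are nondecreasing and begin with the value $0$. Assuming $\mathcal{N}_1(k) \le \mathcal{N}_2(k)$ for all $k$, fix $y>0$ and put $k = R_2(y)$; then $\mathcal{N}_1(k) \le \mathcal{N}_2(k) \le y$, so $k$ belongs to the set defining $R_1(y)$ and $R_1(y) \ge k = R_2(y)$. Conversely, assuming $R_1(y) \ge R_2(y)$ for all $y>0$, fix $k$ with $\mathcal{N}_2(k)>0$ and set $y = \mathcal{N}_2(k)$; then $R_2(y) \ge k$, so $R_1(y) \ge k$, which means some $j \ge k$ has $\mathcal{N}_1(j) \le y$, and monotonicity yields $\mathcal{N}_1(k) \le \mathcal{N}_1(j) \le y = \mathcal{N}_2(k)$. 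The finitely many indices with $\mathcal{N}_2(k) = 0$ are handled trivially since there $\mathcal{N}_1(k)=0$ as well, and the restriction to $y>0$ is harmless because $R_1(0) = R_2(0)$.

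I expect the only genuinely delicate point to be the first ingredient, namely the careful bookkeeping of closed versus open ellipsoids and of the direction of the rescaling needed to convert the defining condition for $\hookrightarrow$ into McDuff's biconditional; the Remark after Theorem \ref{mcduffECH} is exactly what absorbs the open/closed discrepancy. The second ingredient, by contrast, is the standard and essentially formal duality between a monotone sequence and its generalized inverse, and requires nothing beyond monotonicity and the definition of $R$.
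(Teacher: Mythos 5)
Your proof is correct. The paper does not actually prove this corollary --- it is stated with a citation to Hutchings and Bauer --- so there is no in-text argument to compare against; your two-step reduction (first unwinding $\hookrightarrow$ via rescaling and the remark following Theorem \ref{mcduffECH} to reach McDuff's criterion $\mathcal{N}(a,b)(k)\le\mathcal{N}(a',b')(k)$, then the formal duality between a nondecreasing sequence and its generalized inverse $R$) is precisely the standard justification, and both halves, including the open/closed bookkeeping and the handling of $k=0$, check out.
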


To prove theorem \ref{oneemb} by using this corollary, we will first need the following estimates:
\begin{proposition}\label{theparabolas}
For any $y >0$, and $a<b$ we have that

\begin{equation}\label{rineqparab}
  \frac{y^2}{2ab} + \frac{y}{2a} \leq
  R_{a,b}(y) \leq
  \frac{y^2}{2ab} + \frac{y}{2a} + \frac{y}{b} + \frac{b}{8a} + 1.
\end{equation}

\end{proposition}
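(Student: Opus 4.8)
The plan is to reinterpret $R(a,b)(y)$ as a lattice-point count and then estimate that count by slicing and comparing the resulting floor sum with an explicit arithmetic progression. Since $\mathcal{N}(a,b)(k)$ is by definition the increasing enumeration (with repetitions) of all values $a\ell + bp$ with $\ell,p \in \mathbb{Z}_{\ge 0}$, the number of indices $k$ with $\mathcal{N}(a,b)(k) \le y$ is exactly the number of pairs $(\ell,p)$ realizing such a value $\le y$. Thus, up to the additive constant the statement allows,
\[
R(a,b)(y) = \#\{(\ell,p) \in \mathbb{Z}_{\ge 0}^2 : a\ell + bp \le y\},
\]
the number of integer points in the right triangle with legs $y/a$ and $y/b$. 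The main term $\frac{y^2}{2ab}$ is precisely the area of this triangle, so the content of the proposition is a sharp control of the boundary error, and in particular the identification of the linear coefficient $\frac{1}{2a}$ for the $y$-term together with the constant $\frac{b}{8a}$.

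Next I would slice the triangle by the horizontal lines $p = 0, 1, \ldots, P$, where $P = \lfloor y/b\rfloor$. For each such $p$ the admissible $\ell$ range over $0 \le \ell \le (y - bp)/a$, contributing $\lfloor (y-bp)/a\rfloor + 1$ points, so
\[
R(a,b)(y) = \sum_{p=0}^{P}\left(\left\lfloor \tfrac{y-bp}{a}\right\rfloor + 1\right).
\]
Applying the two-sided bound $x - 1 < \lfloor x\rfloor \le x$ term by term reduces the estimate to evaluating the arithmetic sum $\sum_{p=0}^{P}\frac{y-bp}{a} = \frac{P+1}{a}\bigl(y - \frac{bP}{2}\bigr)$ in closed form. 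For the upper bound I keep the extra $\sum_{p=0}^{P} 1 = P+1$ coming from the $+1$'s, while for the lower bound those $+1$'s cancel against the loss $-1$ incurred from each floor.

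The final step is to make the dependence on $P$ explicit by writing $P = y/b - \theta$ with $\theta \in [0,1)$. A short substitution turns $(P+1)\bigl(y - \frac{bP}{2}\bigr)$ into $\tfrac{1}{2}\bigl(\tfrac{y^2}{b} + y + b\,\theta(1-\theta)\bigr)$, whence the main terms $\frac{y^2}{2ab} + \frac{y}{2a}$ appear exactly and the fractional-part contribution is $\frac{b\,\theta(1-\theta)}{2a}$. Bounding $\theta(1-\theta) \le \tfrac14$ produces the constant $\frac{b}{8a}$ in the upper estimate, while the nonnegativity of $\theta(1-\theta)$ yields the lower estimate; the remaining $P+1 \le \tfrac{y}{b} + 1$ supplies the $\frac{y}{b} + 1$ terms on the right of \eqref{rineqparab}. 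I expect the only delicate point to be the bookkeeping of the floor errors and the $+1$ per slice: the area heuristic alone is too crude to pin down the coefficient $\frac{1}{2a}$ (a naive outer covering by $a \times b$ rectangles gives $\frac{y}{a}$ instead), so it is the exact evaluation of the arithmetic sum, rather than an integral comparison, that makes the sharp constants come out.
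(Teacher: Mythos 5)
Your argument is correct and yields exactly the stated bounds; it organizes the same lattice-point count as the paper, but algebraically rather than geometrically. The paper interprets $R_{a,b}(y)$ as the area of the union of unit squares with lower-left corners at the admissible lattice points, takes the triangle of area $\frac{y^2}{2ab}$ under the diagonal as the base term, adds for the lower bound the triangles $T_k$ resting on the diagonal between consecutive points of integer height (vertical bases $1$, horizontal heights summing to $y/a$, total area $\frac{y}{2a}$), and for the upper bound covers the rest of the staircase by $\lceil y/b\rceil$ unit squares plus one thin leftover triangle of area $\frac{b}{2a}\{y/b\}(1-\{y/b\})$. Your exact evaluation of $\sum_{p=0}^{P}\bigl(\lfloor (y-bp)/a\rfloor+1\bigr)$ reproduces each of these pieces: the term $\frac{b}{2a}\,\theta(1-\theta)$ is precisely the area of that leftover triangle, and $P+1\le \frac{y}{b}+1$ plays the role of $\lceil y/b\rceil$. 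What your version buys is that it is purely mechanical --- no figure, no case about whether the thin triangle is present --- while the paper's version makes visible why the leading term is an area and where each correction comes from. One point you should make explicit rather than wave at: since $\mathcal{N}(a,b)$ is indexed from $k=0$, the supremum $\sup\{k:\mathcal{N}(a,b)(k)\le y\}$ is one \emph{less} than the number of pairs $(\ell,p)$ with $a\ell+bp\le y$, and for the lower bound that additive constant is not ``allowed'' by the statement. The fix is simply to adopt, as the paper does following Hutchings, the convention that $R_{a,b}(y)$ denotes the lattice-point count itself; with that convention fixed on both sides of Corollary \ref{corhut}, your computation gives the proposition as stated.
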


\begin{proof}
As pointed in \cite{hutch}, $R_{a,b}(y)$ is interpreted geometrically as the number
of non-negative integer vectors $(m,n)$ in the closed triangle bounded by
$m=0$, $n=0$, and the diagonal $am+bn=y$. Equivalently, this represents the
area of the union of unit squares with lower-left corners $(m,n)$ such that
$am+bn \leq y$. The triangular area $\frac{y^2}{2ab}$ under the diagonal is
then an obvious lower estimate for $R_{a,b}(y )$, but it omits the sizable
area of the staircase-shaped region $S$ above the diagonal. Let $\{y/b\}=\frac{y}{b} - \lfloor y/b \rfloor$ denote the fractional part of $\frac{y}{b}$.

\begin{figure}[h]\begin{center}
  \includegraphics{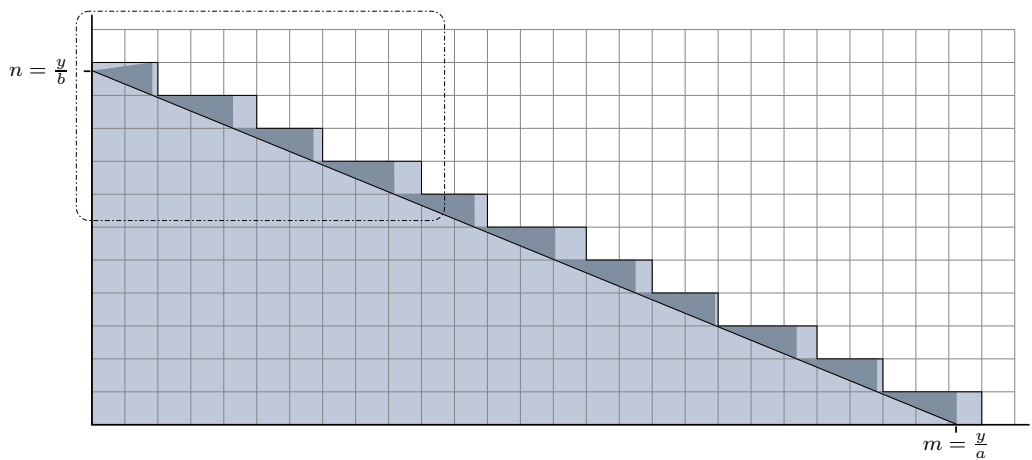}
  \caption{Triangles $T_k$ and the region $S$}
  \label{fig:Pack1}
\end{center}\end{figure}
To estimate the area of $S$, let $p_0 = (0,y/b)$, and let $p_1, \ldots,
p_{\lceil y /b \rceil}$ be the points of integer height along the diagonal,
indexed from left to right. For every $k$ with $1 \le k \le \lceil y/b \rceil$,
let $T_k$ be the triangle formed by $p_{k-1}$, $p_k$, and the point one unit
above $p_k$. Every triangle $T_k \subset S$ rests atop the diagonal, these are the darkest triangles in Figure \ref{fig:Pack1}. They all
have (vertical) base of length 1, and their (horizontal) heights add up to
$y/a$; thus, the total area of the triangles is $y/2a$, and
\[\frac{y^2}{2ab} + \frac{y}{2a} \leq R_{a,b}(y).\]

For the upper estimate, add unit squares to the right of $T_1, \ldots ,
T_{\lceil y/b \rceil}$, see Figure \ref{fig:Pack2}. These have total area $\lceil y/b \rceil$, and cover the remainder of
$S$, except (sometimes) for a thin triangle above $T_1$ sitting at the top left of Figure \ref{fig:Pack2}.
%\begin{center}\begin{figure}[h]
%  \includegraphics{}
%  \caption{Added squares covering $S$}
%  \label{fig:Pack2}
%\end{figure}\end{center}
The (vertical) base of this triangle has length $1 - \{y/b\}$. Its (horizontal) height
is the first coordinate of $p_1$, that is, $\frac{b}{a}\{y/b\}$. Therefore we get
\[
  R_{a,b}(y) \leq
  \frac{y^2}{2ab} + \frac{y}{2a} + \lceil \frac{y}{b} \rceil + \frac{b}{2a}\{y/b\}(1 - \{y/b\})
 \le  \frac{y^2}{2ab} + \frac{y}{2a} + \frac{y}{b} + \frac{b}{8a}+1.\]

where for the final inequality we note that $\{y/b\}(1-\{y/b\}) \le \frac{1}{4}$ since $0\le\{y/b\}<1$.

\begin{figure}[h]\begin{center}
  \includegraphics{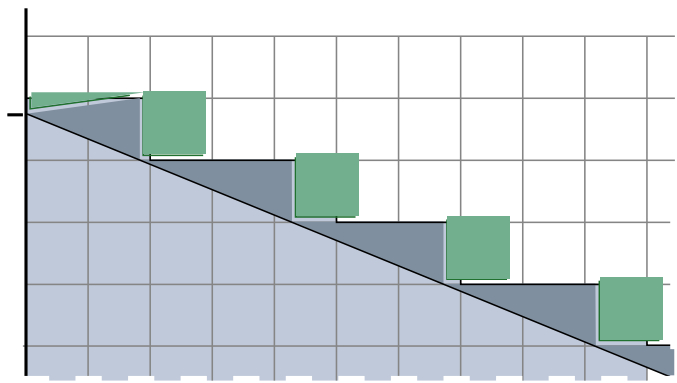}
  \caption{Added squares covering $S$}
  \label{fig:Pack2}
\end{center}\end{figure}

\end{proof}

%Let us conclude by remarking that the quadratic term bounds from this formula has been observed by Hutchings in his proof of his

\subsection{A volume filing ellipsoid embedding in dimension four}

%\begin{theorem}
%Fix $b\geq 1$ and consider the ellipsoid $E(1,b)$. then for any $a  \geq \frac{(b+4)^2}{b}$ we have an embedding $E(1,a)  \hookrightarrow \sqrt{\frac{a}{b}} E(1,b)$, that is, the embedding capacity function $f_b(a) = \sqrt{\frac{a}{b}}$ .
%\end{theorem}
We can now give a proof of theorem \ref{oneemb}:

\begin{proof}

Let us call $c:=\sqrt{\frac{\alpha}{\beta}}>1$ by hypothesis. Assuming $\alpha \geq \frac{(5 \beta+16)^2}{16 \beta}$ and $\beta>1$ it is required to show that we have a volume filling embedding $E(1,\alpha)  \hookrightarrow  E(c,c \beta)$. By Corollary \ref{corhut} this is equivalent to showing
\begin{equation}\label{rineqhere}
R_{1,\alpha}(y) \geq R_{c,c \beta}(y)
\end{equation}
for all $y>0$.

As $\alpha > \beta$ it is easy to see that for $0 \leq y \leq c \lfloor \beta \rfloor < \alpha $ we have  $R_{1,\alpha}(y)= \lfloor y \rfloor $ and $R_{c,cb}(y)= \lfloor \frac{y}{c} \rfloor$. Therefore
the inequality \eqref{rineqhere} holds for values of $y \leq c \lfloor \beta \rfloor $. Our hypothesis also implies that $\lfloor \beta \rfloor +1 \leq \lfloor \alpha \rfloor$ and so the graphs of $\mathcal{N}(1,\alpha)(x)$ and $\mathcal{N}(c,c\beta)(x)$ are as shown in Figure \ref{fig:fig3} for small values of $x$. Hence we can also observe directly that inequality \eqref{rineqhere} holds in the range $ c \lfloor \beta \rfloor \leq y \leq c \beta $.

\begin{figure}[h]\begin{center}
  \includegraphics{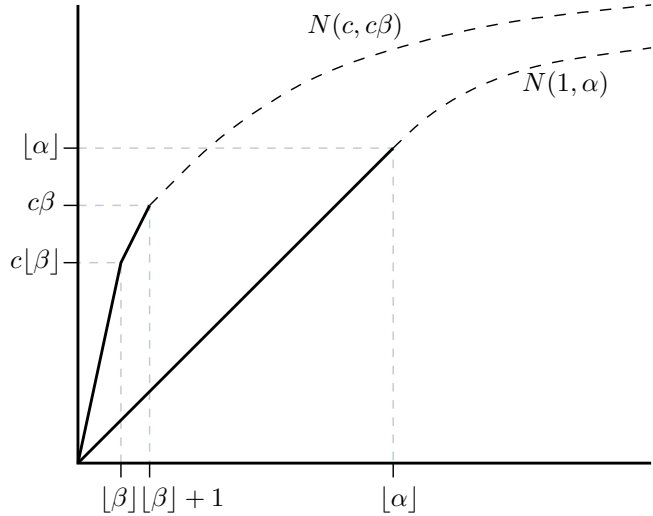}
  \caption{graphs of the ECH capacity functions}
  \label{fig:fig3}
\end{center}\end{figure}

It remains to show that \eqref{rineqhere}  holds for the remaining values $y \geq c \beta$.
Using the inequality \eqref{rineqparab} from Proposition \ref{theparabolas} twice, once for $R_{1,\alpha}(y)$ and again for  $R_{c,c \beta}(y)$,
it is sufficient to show

\begin{equation}
\frac{y^2}{2 \alpha} +\frac{y}{2} \geq \frac{y^2}{2c^2 \beta}+\frac{y}{2c}+\frac{y}{c \beta} +\frac{c \beta}{8 c} +1
\end{equation}
for $y \geq c \beta$. Using the fact that $\alpha=c^2 \beta$, this reduces to showing that

\begin{equation}
\frac{y}{2}(\frac{c\beta - \beta -2}{c\beta}) \geq \frac{\beta}{8} + 1
\end{equation}
for all $y \geq c \beta$. But this is equivalent to our hypothesis
$\alpha \geq \frac{(5 \beta+16)^2}{16 \beta}$.
\end{proof}

\section{Volume filling embeddings}

\begin{theorem}\label{general} Define a constant $S$ by $S^{\frac{1}{n-1}}=\frac{2^{n+6}}{3}20^{\frac{n-2}{2}} \max_k 20^{\frac{k(k-1)}{2}} \frac{b_1 \dots b_n}{b_k^n}$. Then if $\frac{a_n}{a_1}>S$ and $a_1 \dots a_n = b_1 \dots b_n$ there exists a volume preserving embedding $$E(a_1, \dots ,a_n) \hookrightarrow E(b_1, \dots ,b_n).$$
\end{theorem}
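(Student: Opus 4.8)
The plan is to prove Theorem~\ref{general} by induction on the dimension $n$, using Theorem~\ref{oneemb} as the base case and the ``lower-to-higher'' construction of Buse--Hind \cite{busehind} to raise the dimension one factor at a time. For $n=2$ the statement is exactly a four-dimensional volume-filling embedding: after rescaling so that $a_1=1$, the hypothesis $a_2/a_1>S$ forces $a_2$ past the threshold $\frac{(5\beta+16)^2}{16\beta}$ appearing in Theorem~\ref{oneemb} (with $\beta=b_2/b_1\ge 1$), and since the volume constraint $a_1a_2=b_1b_2$ is precisely what makes the target $\sqrt{\alpha/\beta}\,E(1,\beta)$ match the rescaled $E(b_1,b_2)$, that theorem supplies the desired volume-preserving embedding once we scale back. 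So the genuine work is the inductive step: assuming the $2(n-1)$-dimensional statement, I want to add the single largest factor $a_n$ and deduce the $2n$-dimensional statement.

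For the inductive step I would peel off the largest semi-axis $a_n$ and run the Buse--Hind suspension. Concretely, one orders the factors increasingly, applies the $2(n-1)$-dimensional hypothesis to $E(a_1,\dots,a_{n-1})$ to embed it into a suitable $(2n-2)$-dimensional target of equal volume, and then uses the enormous remaining factor $a_n$ to reassemble the product of this lower embedding with the long $z_n$-direction back into the single ellipsoid $E(b_1,\dots,b_n)$. The reassembly is the step that cannot be done by a naive product map: the inclusion $E(\mathbf a,a_n)\subset E(\mathbf a)\times D(a_n)$ lands in a region strictly larger than $E(\mathbf b,b_n)$, so here one invokes the flexible four-dimensional ellipsoid embeddings (ultimately Theorem~\ref{mcduffECH} and Corollary~\ref{corhut}) in the last pair of coordinates, at the cost of a bounded multiplicative distortion of order $\sqrt{20}$ per added factor. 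Repeating this $n-2$ times from the four-dimensional base produces the factor $20^{(n-2)/2}$ in the constant, while the dependence on the \emph{shape} of the target --- the ellipsoid $E(\mathbf b)$ is not round --- is what forces the term $\max_k 20^{k(k-1)/2}\frac{b_1\cdots b_n}{b_k^n}$: one must choose at each stage which factor of $\mathbf b$ to peel against, and the maximum over $k$ records the worst such choice.

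The remaining work is bookkeeping: one checks that the thinness hypothesis propagates, i.e.\ that $\frac{a_n}{a_1}>S$ with the displayed $S$ leaves enough room that, after each lifting step consumes part of the ratio, the reduced $(2n-2)$-dimensional problem still satisfies its own thinness hypothesis, and that volume is preserved throughout (each suspension keeps $\prod a_i=\prod b_i$). I expect the main obstacle to be precisely this control of the accumulated distortion: one must show the per-step loss is genuinely bounded by a constant independent of $n$ and of the particular $a_i$, and that these losses multiply to \emph{exactly} the stated $S^{\frac{1}{n-1}}=\frac{2^{n+6}}{3}20^{\frac{n-2}{2}}\max_k 20^{\frac{k(k-1)}{2}}\frac{b_1\cdots b_n}{b_k^n}$ rather than to some larger quantity. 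Establishing the clean four-dimensional reassembly estimate --- and verifying, via the quadratic bounds of Proposition~\ref{theparabolas} together with Corollary~\ref{corhut}, that it holds whenever the newly added factor is large enough --- is the crux on which the whole induction rests.
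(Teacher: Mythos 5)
Your strategy---induction on the dimension $n$, peeling off the largest factor $a_n$ and suspending a $(2n-2)$-dimensional embedding---is not the paper's strategy, and as described it has two gaps that I do not see how to close. First, the inductive hypothesis cannot be applied to the reduced ellipsoid: the thinness assumption $\frac{a_n}{a_1}>S$ lives entirely in the last factor, so after removing $a_n$ the remaining ellipsoid $E(a_1,\dots,a_{n-1})$ can be arbitrarily round (take $E(1,\dots,1,T)$ with $T$ huge; the reduced ellipsoid is a ball), and no choice of equal-volume $(2n-2)$-dimensional target will satisfy the thinness hypothesis of the lower-dimensional statement. Second, the ``reassembly'' step is exactly where a tool is missing: Proposition \ref{two} only suspends a \emph{four}-dimensional embedding to higher dimensions while leaving the remaining factors untouched; it gives no way to embed the product $E(a_1,\dots,a_{n-1})\times D(a_n)$ (or the image of your lower-dimensional embedding crossed with a long disk) into the single ellipsoid $E(b_1,\dots,b_n)$, and invoking ``flexible four-dimensional embeddings in the last pair of coordinates'' does not address the fact that the product region differs from the target in all coordinates simultaneously. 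Your accounting of the constants (a loss of $\sqrt{20}$ per step, the $\max_k$ term recording the choice of which $b_k$ to peel against) is a plausible reading of where they might come from, but it is not derived from any estimate.

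The paper avoids both problems by never changing dimension. All moves take place inside $\RR^{2n}$, each one being a four-dimensional embedding applied to a chosen \emph{pair} of factors via Proposition \ref{two}. Concretely: (1) a normalization step (Lemma \ref{first}) redistributes volume among the first $n-1$ factors while keeping $a_n$ fixed, until consecutive ratios among $a_1,\dots,a_{n-1}$ are below $20$ and $a_n/a_k$ remains enormous for all $k<n$; this is where the factors $20^{(n-2)/2}$ and $20^{k(k-1)/2}$ actually arise. (2) One then shows $b_k>2a_k$ for all $k<n$ (Lemma \ref{snd}) and runs an induction on $k=1,\dots,n-1$, at each stage pairing the huge last factor with $a_k$ and using Corollary \ref{onecor} to convert $a_k$ into exactly $b_k$, dumping the volume discrepancy into the last factor, which remains large enough for the next stage; after $n-1$ stages the last factor equals $b_n$ automatically by volume preservation. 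The single huge factor thus acts as a reservoir interacting with each of the others in turn---this is the mechanism your proposal would need but does not supply.
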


\
\begin{proof} We will apply a series of volume preserving embeddings to the ellipsoid $E(a_1, \dots ,a_n)$. The factors of our ellipsoids will always be in increasing order. First recall our key tools.

Let $m(x)=\frac{(5x+16)^2}{16x}$. Then $m$ is decreasing on the interval $\{1 \le x \le \frac{16}{5}\}$, increasing on $\{x \ge \frac{16}{5}\}$ and $m(\frac{16}{5})=20$. Theorem \ref{oneemb} gives the following.

\begin{theorem} \label{one} If $\frac{b}{a}>m(\frac{d}{c})$ and $ab=cd$ then $E(a,b) \hookrightarrow E(c,d)$.
\end{theorem}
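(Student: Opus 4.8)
The plan is to deduce Theorem \ref{one} from Theorem \ref{oneemb} by a single rescaling, with the volume hypothesis $ab=cd$ doing the remaining bookkeeping. Following the convention that factors are listed in increasing order, write $a \le b$ and $c \le d$ and set $\alpha := b/a$ and $\beta := d/c$, so that $\beta \ge 1$. With this notation $m(d/c) = \frac{(5\beta+16)^2}{16\beta}$, so the hypothesis $\frac{b}{a} > m(\frac{d}{c})$ becomes $\alpha > \frac{(5\beta+16)^2}{16\beta}$; in particular $\alpha \ge \frac{(5\beta+16)^2}{16\beta}$ with $\beta \ge 1$, which is exactly what Theorem \ref{oneemb} requires (note $m(\beta)\ge 20$ forces $\alpha > 1$ as well).

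First I would apply Theorem \ref{oneemb} to obtain the full filling
\[ E(1,\alpha) \hookrightarrow \sqrt{\tfrac{\alpha}{\beta}}\, E(1,\beta) = E\!\left(\sqrt{\tfrac{\alpha}{\beta}},\, \sqrt{\alpha\beta}\right). \]
Next I would invoke the scale invariance of symplectic embeddings: the conformal dilation $(x,y)\mapsto(\sqrt{\mu}\,x,\sqrt{\mu}\,y)$ pulls $\omega_0$ back to $\mu\,\omega_0$ and sends $E(\cdots)$ to $\mu E(\cdots)$, so conjugating any embedding by it shows that $N \hookrightarrow M$ holds if and only if $\mu N \hookrightarrow \mu M$ for every $\mu>0$; this is compatible with the $\lambda$-scaling in the definition of $\hookrightarrow$, since simultaneous dilation commutes with it. Applying this with $\mu = a$ turns the filling above into
\[ E(a,a\alpha) = E(a,b) \hookrightarrow E\!\left(a\sqrt{\tfrac{\alpha}{\beta}},\, a\sqrt{\alpha\beta}\right). \]

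It then remains to identify the target with $E(c,d)$, which is exactly where $ab=cd$ is used. Substituting $\alpha=b/a$, $\beta=d/c$ gives $a\sqrt{\alpha/\beta} = \sqrt{abc/d}$ and $a\sqrt{\alpha\beta} = \sqrt{abd/c}$; using $ab=cd$ these simplify to $\sqrt{c^2}=c$ and $\sqrt{d^2}=d$ respectively, so the target is precisely $E(c,d)$ and the embedding $E(a,b)\hookrightarrow E(c,d)$ follows. The entire analytic content sits inside Theorem \ref{oneemb}, so there is no genuine obstacle here; the only point deserving care is the scale invariance of the relation $\hookrightarrow$, and the rest is the arithmetic identity forced by the equal-volume condition, which I expect to be the least delicate step.
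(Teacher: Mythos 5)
Your proposal is correct and matches the paper's approach: the paper simply asserts that Theorem \ref{oneemb} ``gives'' Theorem \ref{one}, leaving implicit exactly the rescaling by $\mu=a$ and the arithmetic with $ab=cd$ that you spell out. The details you supply (the conformal scale invariance of $\hookrightarrow$ and the identification of the rescaled target with $E(c,d)$) are the intended ones and are carried out correctly.
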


\begin{corollary} \label{onecor} Let $\frac{b}{a} > \frac{2^7}{3}$ and $2a < d < \sqrt{ab}$. Then $E(a,b) \hookrightarrow E(d,\frac{ab}{d})$.
\end{corollary}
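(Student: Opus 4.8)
The plan is to reduce the statement directly to Theorem \ref{one}. First I would record that the proposed target $E(d,\tfrac{ab}{d})$ is volume-preserving with respect to the source, since $d\cdot\tfrac{ab}{d}=ab$, and that the hypothesis $d<\sqrt{ab}$ forces $d<\tfrac{ab}{d}$, so its factors are already written in increasing order with $d$ the smaller one. Applying Theorem \ref{one} with source $E(a,b)$ and this target, the volume hypothesis $ab=d\cdot\tfrac{ab}{d}$ is automatic, so the whole problem collapses to verifying the single inequality $\tfrac{b}{a}>m\!\left(\tfrac{ab/d}{d}\right)=m\!\left(\tfrac{ab}{d^2}\right)$.

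Next I would set $x:=\tfrac{ab}{d^2}$ and $R:=\tfrac{b}{a}$ and convert the two constraints on $d$ into constraints on $x$. From $d<\sqrt{ab}$ we get $x>1$, and from $d>2a$ we get $x=\tfrac{ab}{d^2}<\tfrac{ab}{4a^2}=\tfrac{R}{4}$; thus $x\in(1,R/4)$. Since the corollary is asserted for every admissible $d$, it is enough to prove $m(x)<R$ uniformly for all $x$ in this interval.

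The key computation is the expansion $m(x)=\tfrac{(5x+16)^2}{16x}=\tfrac{25x}{16}+10+\tfrac{16}{x}$, which splits $m$ into an increasing term, a constant, and a decreasing term. Bounding each separately over $x\in(1,R/4)$ gives $\tfrac{25x}{16}<\tfrac{25R}{64}$ and $\tfrac{16}{x}<16$, hence $m(x)<\tfrac{25R}{64}+26$. It therefore suffices to have $R\ge\tfrac{25R}{64}+26$, equivalently $\tfrac{39R}{64}\ge 26$, equivalently $R\ge\tfrac{128}{3}=\tfrac{2^7}{3}$, which is exactly the hypothesis $\tfrac{b}{a}>\tfrac{2^7}{3}$ (with strict inequality to spare). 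This yields $\tfrac{b}{a}>m(\tfrac{ab}{d^2})$, and Theorem \ref{one} produces the embedding $E(a,b)\hookrightarrow E(d,\tfrac{ab}{d})$.

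The one point of inefficiency, rather than genuine difficulty, is that the two monotone terms $\tfrac{25x}{16}$ and $\tfrac{16}{x}$ attain their suprema at opposite endpoints of the interval ($x\to R/4$ and $x\to1$), so the combined bound $\tfrac{25R}{64}+26$ is not sharp; comparing $R$ directly against $\max(m(1),m(R/4))$ would permit a smaller constant than $\tfrac{2^7}{3}$. I expect the stated constant is chosen for its clean closed form and because it is comfortably sufficient, so I would keep the simple term-by-term estimate. No monotonicity property of $m$ itself is needed for this crude bound, since each of the three pieces is handled on its own.
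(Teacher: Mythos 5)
Your proposal is correct and follows essentially the same route as the paper: reduce to Theorem \ref{one} via the expansion $m\left(\frac{ab}{d^2}\right)=\frac{25}{16}\frac{ab}{d^2}+10+\frac{16d^2}{ab}$, bound the first term using $d>2a$ and the last using $d^2<ab$, and arrive at the same inequality $\frac{39}{64}\cdot\frac{b}{a}>26$. The only difference is cosmetic (your substitution $x=\frac{ab}{d^2}$, $R=\frac{b}{a}$), and your remark that the constant $\frac{2^7}{3}$ is not sharp is consistent with the paper's crude term-by-term estimate.
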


\begin{proof} The upper bound on $d$ guarantees that the factors of our image ellipsoid are in increasing order. Therefore by Theorem \ref{one} we just need to check that $\frac{b}{a}>m(\frac{ab}{d^2})$. We compute $$\frac{b}{a}-m(\frac{ab}{d^2}) = \frac{b}{a}-\frac{25}{16}\frac{ab}{d^2} - 10 - \frac{16d^2}{ab} \ge \frac{b}{a}-\frac{25}{16}\frac{b}{4a} - 26=\frac{39b}{64a}-26.$$ This last term is positive as required given our hypothesis.
\end{proof}

We will also utilize Proposition $2.1$ from Buse-Hind \cite{busehind}.

\begin{proposition} \label{two}\cite{busehind} If $E(a,b) \hookrightarrow E(c,d)$ then $$E(a,b,a_3, \dots ,a_n) \hookrightarrow E(c,d,a_3, \dots ,a_n)$$ for any $a_3, \dots ,a_n$.
\end{proposition}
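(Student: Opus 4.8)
The plan is to build the $2n$-dimensional embedding by fibering both ellipsoids over the circle factor attached to one of the common coordinates and applying the four-dimensional map $E(a,b)\hookrightarrow E(c,d)$ fiberwise. By iterating it suffices to insert a single common factor at a time, so I will prove the following one-factor version: whenever $F\hookrightarrow F'$ for ellipsoids $F=E(f_1,\dots,f_m)$ and $F'=E(f_1',\dots,f_m')$ whose factors agree except possibly in the first two slots, one has $E(f_1,\dots,f_m,t)\hookrightarrow E(f_1',\dots,f_m',t)$. Starting from $E(a,b)\hookrightarrow E(c,d)$ and applying this repeatedly (first with $t=a_3$, then $t=a_4$, and so on, the previously inserted factors now playing the role of agreeing spectator slots) inserts all of $a_3,\dots,a_n$ and gives the proposition. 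Reordering of factors is harmless since permuting the factors of an ellipsoid is a symplectomorphism.

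Introduce action--angle coordinates $(I_j,\theta_j)$ on each factor, so that $E(f_1,\dots,f_m,t)=\{\sum_{j\le m}I_j/f_j + I_{m+1}/t\le 1\}$ and $\omega=\frac{1}{2\pi}\sum_j dI_j\wedge d\theta_j$. Slicing by the value $I_{m+1}=s\in[0,t]$ of the new factor exhibits the domain as a family of fibers $(1-s/t)F$ over the disk of area $t$, and likewise the target as fibers $(1-s/t)F'$. The hypothesis $F\hookrightarrow F'$ rescales to give, for each $s$, a symplectic embedding $\phi^{(s)}$ of $(1-s/t)F$ into $(1-s/t)F'$, namely $\phi$ conjugated by the dilation of ratio $1-s/t$. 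On each fiber separately this already solves the problem; the whole content is in assembling the fibers.

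The difficulty, and the main obstacle, is that the candidate total map $(w,z_{m+1})\mapsto(\phi^{(s)}(w),z_{m+1})$ with $s=I_{m+1}$ is \emph{not} symplectic, because $\phi^{(s)}$ genuinely depends on $s$ and the cross terms $\partial_s\phi^{(s)}\,dI_{m+1}$ spoil $\omega$. I would remove these by a symplectic suspension: keep $I_{m+1}=s$ fixed, so the image never leaves the fiber over $s$ (this is what keeps it inside the target), and replace $\theta_{m+1}$ by $\theta_{m+1}+g(w,s)$, choosing $g$ so that the corrected map $\Phi$ satisfies $\Phi^*\omega=\omega$. A short computation shows the only obstruction is a cross term $\beta_s\wedge dI_{m+1}$, where $\beta_s=\iota_{V_s}\omega_{\mathrm{fiber}}$ and $V_s$ is the vector field generating the family $\phi^{(s)}$; this term is cancelled precisely when $g(\cdot,s)$ is a primitive of $\beta_s$ on the fiber. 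Since each $\phi^{(s)}$ is a symplectomorphism, $V_s$ is symplectic and $\beta_s$ is closed; and since the fibers are star-shaped (in particular $H^1=0$), $\beta_s$ is exact, so a single-valued primitive $g$ exists. This is the heart of the argument.

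I would then dispatch the routine points: that the angular twist by $g$ is well defined and the corrected map injective, and that the image respects the target inequality, which is immediate since the image point has $I_{m+1}'=s$ and $\sum_{j\le m}I_j'/f_j'\le 1-s/t$, whence $\sum_{j\le m}I_j'/f_j' + s/t\le 1$. The fiber degenerations at $s=0$ and $s=t$, and the need for strict inequalities so that the correction has room, are exactly what the $\hookrightarrow$ formulation is designed to absorb: I would carry out the construction on $\lambda E(f_1,\dots,f_m,t)$ for each $\lambda<1$ and let $\lambda\to 1$. All the weight rests on the suspension/exactness step of the previous paragraph; everything else is bookkeeping.
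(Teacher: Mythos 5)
The paper itself offers no proof of this proposition; it is imported verbatim from \cite{busehind} (Proposition 2.1 there), so the only comparison available is with the argument in that reference. Your proof is correct and is in essence the same construction: fiber both $2n$-dimensional ellipsoids over the spectator factor, apply the dilation-conjugated four-dimensional embedding fiberwise, and observe that the only failure of the naive product map to be symplectic is a cross term $\beta_s\wedge dI_{m+1}$ with $\beta_s$ closed on the fiber, which is killed by an angular twist $\theta_{m+1}\mapsto\theta_{m+1}+g$ once a single-valued primitive $g$ of $\beta_s$ is chosen. The difference is only in how the primitive is produced: Buse--Hind first invoke the extension-after-restriction principle to realize the (slightly shrunk) four-dimensional embedding as the time-one map of a compactly supported Hamiltonian isotopy, which hands them $g$ explicitly in terms of the generating Hamiltonian, whereas you argue abstractly from $L_{V_s}\omega=0$ plus $H^1$ of the star-shaped fiber vanishing; both are valid, and your device of running the construction on $\lambda E(f_1,\dots,f_m,t)$ for $\lambda<1$ correctly absorbs the degeneration of the fibers at $s=t$ (there the dilation ratio stays bounded away from zero, so the family $\phi^{(s)}$ and hence $g$ remain smooth). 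Two harmless slips: the conjugating dilation acts on coordinates with ratio $\sqrt{1-s/t}$, not $1-s/t$, so that the ellipsoid parameters scale linearly; and the hypothesis in your one-factor lemma that the factors agree outside the first two slots is never used --- only $F\hookrightarrow F'$ is needed, which is also what makes the iteration over $a_3,\dots,a_n$ go through.
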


Note here that as the order of the factors is irrelevant analogous statements hold for all other pairs of factors.

We start with the following lemma.

\begin{lemma} \label{first} There exists a volume preserving embedding $E(a_1, \dots ,a_n) \hookrightarrow E(a'_1, \dots ,a'_n)$ such that
\renewcommand{\theenumi}{(\roman{enumi})}
\begin{enumerate}
\item $a'_n=a_n$;
\item $\frac{a'_k}{a'_{k-1}}<20$ for all $1 \le k \le n-1$;
\item $\frac{a'_n}{a'_k}>20^{-\frac{n-2}{2}}S^{1/{n-1}}$ for all $1 \le k \le n-1.$
\end{enumerate}
\end{lemma}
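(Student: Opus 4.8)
The plan is to leave the largest factor $a_n$ untouched and to realize the embedding as a finite composition of two-dimensional moves acting on pairs of the remaining factors $a_1,\dots,a_{n-1}$; by Proposition~\ref{two} (and its analogue for an arbitrary pair of slots) each such move extends to the whole ellipsoid while fixing the other factors, and since every move is volume preserving the product $a_1\cdots a_{n-1}$ is an invariant of the process. Note first that one cannot use $a_n$ as a reservoir: a volume-preserving move lowers the larger factor and raises the smaller, so the global maximum can only decrease and could never be restored to $a_n$; hence all redistribution must take place among the bottom $n-1$ factors, and the hypothesis $a_n/a_1>S$ must be exploited only through the volume relation. Writing $g=(a_1\cdots a_{n-1})^{1/(n-1)}$ for the (invariant) geometric mean of the bottom factors and using $a_1\cdots a_n=b_1\cdots b_n$ together with $a_1<a_n/S$, I would first record the key inequality $a_n/g>S^{1/(n-1)}$, which is what ultimately converts $a_n/a_1>S$ into the bound (iii).

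The engine is Theorem~\ref{one}: since $m$ attains its minimum value $20$ at $16/5$, any pair of factors whose ratio exceeds $20$ can be replaced, volume-preservingly, by a pair of ratio $16/5$. I would iterate the following move: among the current bottom factors pick the global minimum $s$ and global maximum $L$; as long as $L/s\ge 20$, apply Theorem~\ref{one} to reset the pair $(s,L)$ to ratio $16/5$ (keeping $sL$ fixed). Crucially I act on the extreme pair rather than on adjacent factors: acting on adjacent factors tends to cascade, creating new large gaps below, whereas contracting the two extremes steadily shrinks the overall spread. The move is applicable precisely because any remaining large gap forces $L/s\ge 20$. For termination I would use the potential $\Phi=\sum_i(\log a_i-\log g)^2$: replacing the extreme logs $(\log s,\log L)$ by two points symmetric about their (preserved) midpoint with separation $\log(16/5)$ decreases $\Phi$ by $\tfrac12\big((\log(L/s))^2-(\log(16/5))^2\big)\ge \tfrac12\big((\log 20)^2-(\log(16/5))^2\big)>0$; since $\Phi\ge 0$, the process stops after finitely many moves, and it can only stop when $L/s<20$.

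At termination the bottom factors have total spread $L/s<20$, so all consecutive ratios are $<20$, giving (ii), and the maximum satisfies $a_{n-1}'<20\,\min\le 20\,g$. I would then verify (iii), which (since $a_k'\le a_{n-1}'$, the binding case is $k=n-1$, and using $a_n/g>S^{1/(n-1)}$) reduces to the single inequality $a_{n-1}'\le 20^{(n-2)/2}g$; for $n\ge 4$ this follows from $a_{n-1}'<20\,g\le 20^{(n-2)/2}g$. The case $n=3$ (only two bottom factors) is handled directly: a single move brings their ratio to $16/5$, so the larger becomes $\sqrt{16/5}\,g<\sqrt{20}\,g=20^{(n-2)/2}g$. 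Finally, since $a_{n-1}'<20^{(n-2)/2}g\ll a_n$, the factor $a_n$ remains the global maximum, so the resulting ellipsoid has its factors in increasing order with $a_n'=a_n$, giving (i).

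I expect the main obstacle to be reconciling the applicability threshold of the embedding move with the target: the move is available only when a ratio exceeds (essentially) $20$, yet (ii) demands ratios below $20$ and (iii) demands the stronger bound $a_{n-1}'\le 20^{(n-2)/2}g$ on the top bottom factor. The resolution is the choice to contract the global extremes—which both avoids the cascade and drives the spread all the way below $20$—together with the conversion of the hypothesis into $a_n/g>S^{1/(n-1)}$. Making the bookkeeping of these constants match the stated exponents, and confirming finite termination, is the technical heart of the argument.
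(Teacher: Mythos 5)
Your argument is correct, and it runs on the same engine as the paper's proof: the four-dimensional move $E(a,b)\hookrightarrow\sqrt{5ab/16}\,E(1,\tfrac{16}{5})$ supplied by Theorem \ref{one}, stabilized to higher dimensions by Proposition \ref{two}, applied repeatedly to pairs among the bottom $n-1$ factors while $a_n$ and the product $a_1\cdots a_{n-1}$ stay fixed. The differences are in the scheduling and the bookkeeping. The paper always contracts an \emph{adjacent} pair with ratio $>20$ (which keeps the factors ordered with no re-sorting), proves termination by observing that each move multiplies the partial product $a_1\cdots a_{k-1}$ by at least $5/2$ while all partial products are bounded above, and then derives (iii) only through (ii), telescoping consecutive ratios to get $(a'_{n-1})^{n-1}\le 20^{(n-1)(n-2)/2}\,a'_1\cdots a'_{n-1}$. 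You contract the \emph{extreme} pair, prove termination with the decreasing potential $\sum_i(\log a_i-\log g)^2$, and end in the strictly stronger position that all bottom factors lie in a multiplicative window of width $20$, so $a'_{n-1}<20g$; combined with $a_n/g>S^{1/(n-1)}$ this gives (iii) directly, with the extra factor $20^{(n-2)/2}$ to spare once $n\ge 4$ and with $n=3$ checked by hand. Your observation that the move is only licensed at ratio strictly greater than $20$ (so termination really gives ratios $\le 20$ rather than $<20$) is a boundary issue the paper shares, and it is harmless downstream. Both routes are valid; yours gives a cleaner termination proof and a sharper form of (iii), at the small cost of having to re-sort factors after each move.
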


\begin{proof} We perform a sequence of embeddings fixing throughout the largest factor $a_n$ of the ellipsoid.

Theorem \ref{one} implies that if $\frac{a_k}{a_{k-1}}>20$ then there exists an embedding $E(a_{k-1},a_k) \hookrightarrow \sqrt{\frac{5a_{k-1}a_k}{16}}E(1,\frac{16}{5})$.

We will apply the embedding of Proposition \ref{two} induced by this four dimensional embedding to any consecutive pair of factors $a_{k-1}, a_k$ for which $k<n$ and whose ratio is greater than $20$. After such an operation the product of the first $k-1$ factors will increase by a factor of at least $\sqrt{\frac{100}{16}}$, but the product of the first $n-1$ factors is fixed throughout. Thus after a finite number of steps we arrive at an ellipsoid $E(a'_1, \dots ,a'_n)$ satisfying condition $(ii)$.

For condition $(iii)$ we estimate
\[\left(\frac{a'_n}{a'_k}\right)^{n-1} \geq \left(\frac{a'_n}{a'_{n-1}}\right)^{n-1}\]
\[\geq \frac{(a'_n)^{n-1}}{20^{(n-1)(n-2)/2}} \cdot \frac{20 \cdot 20^2 \cdot \ldots \cdot 20^{n-2} }{(a'_{n-1})^{n-1}}\]
\[\geq \frac{(a'_n)^{n-1}}{20^{(n-1)(n-2)/2} \cdot a'_{n-1} \cdot a'_{n-2} \cdot \ldots \cdot a'_1}\]
using property $(ii)$.

Now, as all our embeddings are volume preserving and fix $a_n$, the product of the first $n-1$ terms is also preserved under our sequence of embeddings. Thus we have
\[\left(\frac{a'_n}{a'_k}\right)^{n-1} \geq \frac{a_n^{n-1}}{20^{(n-1)(n-2)/2} \cdot a_{n-1} \cdot a_{n-2} \cdot \ldots \cdot a_1}\]
\[\geq 20^{\frac{-(n-1)(n-2)}{2}}S,\]
where for the final inequality we used simply that $a_k \le a_n$ for all $k<n$ and that by hypothesis $a_n>Sa_1$. Our lemma follows.
\end{proof}

Now we drop the primes from the ellipsoid resulting from Lemma \ref{first} and write simply $E(a_1, \dots ,a_n)$ for our new range.

\begin{lemma} \label{snd} $b_k >2a_k$ for all $1 \le k \le n-1$.
\end{lemma}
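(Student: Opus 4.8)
The plan is to prove the pointwise bound $a_k < \tfrac12 b_k$ for each $k \le n-1$ directly from the three properties of Lemma \ref{first}, together with the volume identity $a_1 \cdots a_n = b_1 \cdots b_n$, which still holds for the (re-indexed, primes-dropped) ellipsoid because every embedding applied so far is volume preserving. The guiding principle is that the constant $S$ of Theorem \ref{general} has been reverse-engineered so that all the extraneous quantities---the full product $b_1 \cdots b_n$, the powers of $20$, and the large factor $a_n$---cancel, leaving a clean comparison between $a_k$ and $b_k$.

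First I would use property (iii) to bound $a_k$ above in terms of $a_n$: from $\frac{a_n}{a_k} > 20^{-(n-2)/2} S^{1/(n-1)}$ we get $a_k < a_n\, 20^{(n-2)/2} S^{-1/(n-1)}$. Next I would eliminate $a_n$ by volume preservation, writing $a_n = \frac{b_1 \cdots b_n}{a_1 \cdots a_{n-1}}$, and then substitute the defining value of $S$. Since for the particular index $k$ at hand $S^{1/(n-1)} \ge \frac{2^{n+6}}{3}\,20^{(n-2)/2}\,20^{k(k-1)/2}\,\frac{b_1\cdots b_n}{b_k^{\,n}}$, the factors $b_1 \cdots b_n$ and $20^{(n-2)/2}$ cancel and I am left with
\[a_k < \frac{3}{2^{n+6}}\,20^{-k(k-1)/2}\,\frac{b_k^{\,n}}{a_1 \cdots a_{n-1}}.\]

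The crucial remaining step is to lower-bound the partial product $a_1 \cdots a_{n-1}$ in terms of $a_k$ alone, and this is precisely where property (ii) enters. Because consecutive ratios among the first $n-1$ factors are below $20$, for $i \le k$ one has $a_i > a_k\,20^{i-k}$, while for $i > k$ one has $a_i \ge a_k$; multiplying these and summing the exponents via $\sum_{i=1}^{k}(k-i) = \tfrac{k(k-1)}{2}$ gives $a_1 \cdots a_{n-1} \ge a_k^{\,n-1}\,20^{-k(k-1)/2}$. Substituting this into the displayed bound, the two powers of $20^{-k(k-1)/2}$ cancel and I obtain $a_k^{\,n} < \frac{3}{2^{n+6}}\,b_k^{\,n}$, whence $a_k < \tfrac12\bigl(\tfrac{3}{64}\bigr)^{1/n} b_k < \tfrac12 b_k$, using $2^{n+6} = 64\cdot 2^n$ and the elementary fact $3 < 64$. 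This is exactly $b_k > 2 a_k$.

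I expect the main difficulty to be bookkeeping rather than conceptual: one must match the exponent $k(k-1)/2$ produced by the telescoping estimate on $a_1 \cdots a_{n-1}$ with the identical exponent built into $S$, and check that the $\max_k$ in the definition of $S$ supplies the needed inequality simultaneously for every $k \le n-1$. The reason everything closes with room to spare is the numerical slack $3 < 2^6$, which furnishes the extra factor of $2$ after all cancellations.
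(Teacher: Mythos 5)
Your proposal is correct and is essentially the paper's own argument: both proofs combine the volume identity with property (ii) (to bound $a_1\cdots a_{k-1}$ below by $a_k^{k-1}20^{-k(k-1)/2}$), the ordering $a_i\ge a_k$ for $i\ge k$, and property (iii) together with the definition of $S$, arriving at $(a_k/b_k)^n\le \tfrac{3}{2^{n+6}}<2^{-n}$. The only difference is cosmetic: the paper lower-bounds the ratio $\frac{a_1\cdots a_n}{b_1\cdots b_n}=1$ directly, while you solve for $a_k$ and substitute, which is the same computation rearranged.
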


\begin{proof}
For any $k$ we have
\[1=\frac{a_1 \ldots a_n}{b_1 \ldots b_n} \ge \frac{a_1 \ldots a_{k-1} a_k^{n-k}a_n}{b_1 \ldots b_n}\]
\[\geq \frac{a_k^{n-k}}{b_1 \ldots b_n} \frac{a_k}{20^{k-1}} \ldots \frac{a_k}{20} 20^{-\frac{n-2}{2}}S^{1/{n-1}} a_k\]
using properties $(ii)$ and $(iii)$ from Lemma \ref{first}
\[ = \left(\frac{a_k}{b_k}\right)^n \frac{20^{-\frac{n-2}{2}}S^{1/{n-1}}(b_k)^n}{20^{k(k-1)/2}b_1 \dots b_n} \geq \left(\frac{a_k}{b_k}\right)^n 2^n\]
and the Lemma follows.

\end{proof}

Now we complete the proof of Theorem \ref{general}. We will proceed by induction on $k=1, \dots ,n-1$. After the $kth$ stage we will have found a volume preserving embedding into an ellipsoid of the form $E(b_1,\dots ,b_k,a_{k+1},\dots ,a_{n-1},a_{n,k})$ where the $a_i$ are the factors of our latest ellipsoid above and $a_{n,k}$ will satisfy $\frac{a_{n,k}}{a_{n-1}} > \frac{M}{2^k}$. Here we define $M=20^{-\frac{n-2}{2}}S^{1/{n-1}}$, so by Lemma \ref{first} we have $\frac{a_n}{a_{n-1}} > M.$

For our inductive step, we claim that if $\frac{a_{n,k-1}}{a_{n-1}} > \frac{M}{2^{k-1}}$ then there exists an embedding $$E(a_k,a_{n,k-1}) \hookrightarrow E(b_k, \frac{a_{n,k-1}a_k}{b_k}).$$ We can then set $a_{n,k} = \frac{a_{n,k-1}a_k}{b_k}$ and provided $k<n-1$ we have $\frac{a_{n,k}}{a_{n-1}} > \frac{M}{2^k}$ by Lemma \ref{snd}. Thus by repeated applications of Proposition \ref{two} we can conclude by induction. After the $(n-1)$st step we will automatically have $a_{n,n-1}=b_n$ by the hypothesis $a_1 \dots a_n = b_1 \dots b_n$ and the fact that all of our embeddings are volume preserving.

To justify the claim, we will apply Corollary \ref{onecor} with $a=a_k$, $b=a_{n,k-1}$ and $d=b_k$. There are three conditions to check. First $$\frac{a}{b}=\frac{a_{n,k-1}}{a_{k}} \ge \frac{a_{n,k-1}}{a_{n-1}} >\frac{M}{2^{k-1}} = 20^{-\frac{n-2}{2}}2^{-k+1}S^{1/{n-1}}>\frac{2^7}{3}$$ by our assumption on $S$. Secondly, $d=b_k<2a_k$ by Lemma \ref{snd}. Finally, $d^2 = b_k^2 \le b_k b_n < a_k a_{n,k-1} = ab$ as our ellipsoids have equal volume but $b_k>a_k$ for all $k<n$.

 %estimate the ratio of the factors in the image ellipsoid by
%$$\frac{a_{n,k-1}a_k}{b_k^2} = \frac{a_{n,k-1}}{a_k} (\frac{a_k}{b_k})^2 < \frac{M}{2^{k+1}}.$$
%Therefore $$m(\frac{a_{n,k-1}a_k}{b_k^2})= \frac{M}{2^{k+1}} + 8 + \frac{2^{k+5}}{M}$$
%and as $\frac{M}{2^k} \ge \frac{M}{2^{n-2}} = 2^{-3n+6}S^{1/{n-1}}\ge 8$ we see that $$\frac{M}{2^{k+1}} + 8 + \frac{2^{k+5}}{M} < \frac{M}{2^{k-1}} < \frac{a_{n,k-1}}{a_{n-1}}.$$ Thus the embedding exists as required by Theorem \ref{one}.

\end{proof}

\section{Full filling by an ellipsoid}

In this section we present Opshtein's result, which is a refinement of Biran's polarization theorem,  that any rational symplectic manifold can be fully filled by an ellipsoid, that is, we prove the following.

\begin{theorem} [\cite{opshtein} \cite{ops2}]\label{sec4} Let $(M,\omega)$ be rational. Then there exist $b_1, \dots ,b_n$ such that there exists a full filling $E(b_1, \dots ,b_n) \hookrightarrow (M,\omega)$.
\end{theorem}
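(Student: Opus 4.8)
The plan is to recall Biran's symplectic polarization/decomposition theorem and Opshtein's refinement, and to assemble the ellipsoid filling inductively from a sequence of ball-and-skeleton decompositions. First I would recall the setup: since $(M^{2n},\omega)$ is rational, after rescaling $\omega$ (which is permitted by the hypothesis allowing $\lambda\omega$) we may assume $[\omega]$ is an integral class, and in fact that $[\omega/\pi] \in H^2(M,\ZZ)$ is represented by a smooth symplectic hypersurface $\Sigma \subset M$ that is Poincar\'e dual to a multiple of $[\omega]$. This is the starting point of Biran's polarization theorem: a neighborhood of $\Sigma$ is a standard symplectic disk bundle, and the complement $M \setminus \Sigma$ deformation retracts onto an isotropic CW-complex (the ``skeleton''). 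The upshot is that $M$ decomposes, up to a set of measure zero, into a standard symplectic disk bundle over $\Sigma$ and the skeleton.

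Next I would set up the induction on dimension. The key idea, due to Opshtein, is that the standard disk bundle neighborhood of $\Sigma$ contains a large symplectically embedded set whose complement is the disk-bundle over the skeleton. Concretely, Opshtein constructs an embedding of a disk bundle with a prescribed symplectic form whose fibers have a fixed area, fibered over $\Sigma$. Applying the inductive hypothesis to $(\Sigma,\omega|_\Sigma)$ — which is again rational and of dimension $2(n-1)$ — one obtains a full filling $E(b_1,\dots,b_{n-1}) \hookrightarrow (\Sigma,\omega|_\Sigma)$. I would then thicken this $(n-1)$-dimensional ellipsoid filling of $\Sigma$ by the disk-fiber direction: the total space of the disk bundle restricted over the image of the ellipsoid, together with the fiber area, yields a symplectic embedding of an $n$-dimensional ellipsoid $E(b_1,\dots,b_{n-1},b_n)$ into a neighborhood of $\Sigma$, hence into $M$. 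The extra factor $b_n$ is governed by the area of the disk fiber, i.e. by the integral of $\omega$ over the fiber class, which is determined by the degree of $\Sigma$ as a divisor.

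For the volume-filling (full filling) conclusion I would track volumes carefully through the induction. The point is that the skeleton and the hypersurface $\Sigma$ are both of measure zero in $M$, so the disk-bundle neighborhood exhausts the full volume of $M$ in the limit; combined with the full filling of $\Sigma$ by $E(b_1,\dots,b_{n-1})$ and the fact that Opshtein's construction wastes no volume in the fiber direction, one concludes $\mathrm{vol}(E(b_1,\dots,b_n)) = \mathrm{vol}(M,\omega)$. The base case $n=1$ is trivial: a rational two-dimensional symplectic manifold is a surface of total area equal to some multiple of $\pi$, and an open disk of that area (an ellipsoid $E(b_1)$ in dimension two) fills it up to measure zero. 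Thus the induction produces the desired $b_1,\dots,b_n$ with a full filling $E(b_1,\dots,b_n) \hookrightarrow (M,\omega)$.

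The main obstacle, and the genuine content supplied by Opshtein beyond Biran, will be to make the disk-bundle embedding fill the volume efficiently and to control the symplectic form on the total space so that the fibered region is genuinely symplectomorphic to an ellipsoid rather than merely a disk bundle. In particular one must ensure that the combination of the inductively constructed ellipsoid in the base and the disk fibers assembles into the standard ellipsoid normal form $E(b_1,\dots,b_n)$, which requires the fibration to be symplectically trivial over the ellipsoid region and the fiber areas to be consistent with the ellipsoid's defining weights; this is precisely where I would lean on the cited construction of Opshtein \cite{opshtein}, \cite{ops2} rather than reproving it. Since the statement is attributed directly to Opshtein and Biran, I expect the proof in the paper to be largely a matter of correctly invoking and combining these two results with the dimensional induction sketched above.
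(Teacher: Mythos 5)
Your proposal matches the paper's proof: the paper argues by induction on dimension, using Donaldson's theorem to produce a polarizing hypersurface $N$ dual to $m[\omega]$, Biran's decomposition to fully fill $(M,\omega)$ by the standard disk bundle $SDB(N,\tau,m)$, and Opshtein's Lemma $2.1$ to extend the inductively obtained full filling $E(b_1,\dots,b_{n-1}) \hookrightarrow (N,\tau)$ to a full filling $E(b_1,\dots,b_{n-1},\frac{1}{m}) \hookrightarrow SDB(N,\tau,m)$, with the base case handled by Moser's theorem for surfaces. This is exactly the route you describe, including the deferral to the cited results of Biran and Opshtein for the disk-bundle embedding and its volume efficiency.
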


Given Theorem \ref{sec4} we can prove Theorems \ref{stable} and \ref{stable2}. Theorem \ref{general2} is also a direct application of Theorems \ref{sec4} and \ref{main}.

{\it Proof of Theorem \ref{stable}.} Given $(M,\omega)$ rational, we consider the fully filling ellipsoid $E(b_1, \dots ,b_n)$ given by Theorem \ref{sec4}. By Theorem \ref{main} there exists a constant $S(b_1,\dots ,b_n)$ such that there exists a full filling $ \gamma E(1,\dots ,1,k) \hookrightarrow E(b_1, \dots ,b_n)$ for any $k \ge S$. But from \cite{busehind}, Lemma $4.1$ there also exists a full filling \[\sqcup_k B(1) \hookrightarrow E\big( 1^{\times (n-1)},k \big)\] for any $k \in \NN$. Putting the two maps together, we obtain a full filling of $E(b_1, \dots ,b_n)$, and hence $(M,\omega)$, by $k$ balls of capacity $\gamma$. This means that $(M,\omega)$ has packing stability as required. $\qed$

\vspace{0.2in}

{\it Proof of Theorem \ref{stable2}.} Fix an ellipsoid $D=E(a_1, \dots ,a_n)$. We follow an identical line of argument to the proof of Theorem \ref{stable} above, letting $E(b_1, \dots ,b_n)$ be the filling ellipsoid for $(M,\omega)$ from Theorem \ref{sec4}.
\begin{lemma} There exists a full filling \[\sqcup_k E \big(a_1, \dots ,a_n \big) \hookrightarrow E\big( a_1, \dots ,a_{n-1}, ka_n \big)\] for any $k \in \NN$.
\end{lemma}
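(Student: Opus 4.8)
The plan is to construct this full filling by packing $k$ copies of the ellipsoid $E(a_1,\dots,a_n)$ into the ``stretched'' ellipsoid $E(a_1,\dots,a_{n-1},ka_n)$ so that the image has full volume. The strategy mirrors the construction already invoked in the proof of Theorem \ref{stable}, where the special case $a_1=\dots=a_n=1$ is exactly \cite{busehind}, Lemma $4.1$, giving $\sqcup_k B(1) \hookrightarrow E(1^{\times(n-1)},k)$. Since the volumes already match --- the target has volume $k$ times that of a single copy because only the last factor is scaled by $k$ --- the content of the lemma is purely the existence of the embedding, not any volume comparison.

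First I would observe that one may normalize. The domain $E(a_1,\dots,a_n)$ is a linear image of the ball under the diagonal symplectic transformation stretching the $i$th factor by $a_i$, and the target $E(a_1,\dots,a_{n-1},ka_n)$ is the image of $E(1^{\times(n-1)},k)$ under the \emph{same} diagonal transformation on the first $n-1$ factors together with the scaling $a_n$ on the last. I would therefore apply this common symplectic linear map to the known ball-packing $\sqcup_k B(1) \hookrightarrow E(1^{\times(n-1)},k)$, carrying each ball $B(1)$ to a copy of $E(a_1,\dots,a_n)$ and the target $E(1^{\times(n-1)},k)$ to $E(a_1,\dots,a_{n-1},ka_n)$. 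Because linear symplectomorphisms preserve disjointness and send symplectic embeddings to symplectic embeddings, this immediately produces the desired full filling $\sqcup_k E(a_1,\dots,a_n) \hookrightarrow E(a_1,\dots,a_{n-1},ka_n)$.

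The one point requiring care is that the transformation used to pass from the ball to the general ellipsoid must be \emph{uniform} across all $k$ copies simultaneously; that is, it is a single global linear symplectomorphism of $\RR^{2n}$ applied to the whole packed configuration, not a separate map on each ball. This is automatic here since the $k$ balls in \cite{busehind}, Lemma $4.1$ all sit inside the single target $E(1^{\times(n-1)},k)$ and the same diagonal map acts on the ambient $\RR^{2n}$ containing them all. I would also note that the notion $\hookrightarrow$ permits the rescaling by $\lambda<1$ built into its definition, so the fact that \cite{busehind} provides the packing in this $\hookrightarrow$ sense transfers verbatim, as linear symplectomorphisms commute with the scaling $\lambda\sigma$.

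The main obstacle, such as it is, lies entirely in the cited input: the genuinely nontrivial ball-packing $\sqcup_k B(1) \hookrightarrow E(1^{\times(n-1)},k)$ is the substantive result from \cite{busehind}, and the present lemma is a formal consequence obtained by applying one explicit diagonal symplectomorphism. Thus I expect the proof to be short, with the only thing to verify being that the diagonal linear map is symplectic --- which holds because it scales the $i$th conjugate pair $(x_i,y_i)$ by the square roots of the appropriate factors so that $dx_i\wedge dy_i$ is preserved --- and that it carries the standard ball and the standard stretched ellipsoid to the required general ellipsoids.
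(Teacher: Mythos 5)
There is a fatal gap at the central step: the ``diagonal symplectic transformation'' you invoke does not exist. The linear map sending $B(1)$ onto $E(a_1,\dots,a_n)$ is $(x_i,y_i)\mapsto(\sqrt{a_i}\,x_i,\sqrt{a_i}\,y_i)$, which multiplies $dx_i\wedge dy_i$ by $a_i$; it preserves $\omega_0$ only when every $a_i=1$. Your justification --- that scaling both coordinates of the $i$th conjugate pair by $\sqrt{a_i}$ preserves $dx_i\wedge dy_i$ --- is incorrect: to preserve the form one must scale $x_i$ by $\mu$ and $y_i$ by $\mu^{-1}$, and such a map does not carry the round ball onto $E(a_1,\dots,a_n)$. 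More fundamentally, no symplectomorphism, linear or not, can take $B(1)$ onto $E(a_1,\dots,a_n)$ unless all the $a_i$ coincide: the two domains are distinguished by symplectic invariants (volume and Gromov width, for instance, since the Gromov width of $E(a_1,\dots,a_n)$ is $a_1$). This is precisely the kind of naive rescaling that symplectic rigidity forbids, and it is the reason the lemma has content beyond the case $a_1=\dots=a_n=1$ already covered by \cite{busehind}, Lemma $4.1$.

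The paper's proof therefore goes a genuinely different and necessary route, via Lemma $5.3.1$ of Schlenk \cite{Sh1}. One first embeds $E(a_1,\dots,a_n)$ symplectically into the region $T=\{0\le x_i,\ 0\le y_i\le 1,\ \sum_i x_i/a_i\le 1\}$ (part (i) of that lemma); $k$ disjoint translates of $T$ in the $y_n$ direction then fit inside $U$, which is the analogous region with $0\le y_n\le k$; and part (ii) of the same lemma embeds $U$ into $E(a_1,\dots,a_{n-1},ka_n)$. Composing gives the full filling. If you wish to salvage your outline, you must replace the nonexistent linear change of coordinates by these nonlinear embeddings into and out of simplex-times-cube regions; the ball-packing result you cite cannot be transported to general $a_i$ by any change of coordinates.
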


\begin{proof} Define two subsets of $\RR^{2n}$ as follows. $$T=\{0\le x_1, \dots ,x_n, 0 \le y_1, \dots ,y_n \le 1|\sum_{i=1}^n \frac{x_i}{a_i} \le 1\}$$ and
$$U=\{0\le x_1, \dots ,x_n, 0 \le y_1, \dots ,y_{n-1} \le 1, 0 \le y_n \le k|\sum_{i=1}^n \frac{x_i}{a_i} \le 1\}.$$
Lemma $5.3.1 (i)$ from \cite{Sh1} gives an embedding $E(a_1, \dots ,a_n) \hookrightarrow T$ and hence an embedding \[\sqcup_k E \big(a_1, \dots ,a_n \big) \hookrightarrow U.\] Lemma $5.3.1 (ii)$ implies that there exists an embedding $U \hookrightarrow E(a_1, \dots ,ka_n)$. Composing these two our lemma follows.
\end{proof}

But by Theorem \ref{main} there exists a constant $S(b_1,\dots ,b_n)$ such that there exists a full filling $ \gamma E(1,\dots ,1,k) \hookrightarrow E(b_1, \dots ,b_n)$ for any $k \ge S$. This map and the embedding from our lemma together give packing stability for $(M,\omega)$ with respect to $D$. $\qed$

\vspace{0.2in}

Now we proceed to prove Theorem \ref{sec4} by induction on the dimension of the manifold $M$. Observe that Moser's theorem implies a surface can be fully filled by any number of balls, so we assume that the theorem holds for all manifolds of dimension $2(n-1)$ and let $(M,\omega)$ be rational of dimension $2n$.

Using the rationality assumption, a theorem of S. K. Donaldson, \cite{don}, says that $(M,\omega)$ can be polarized in the sense that there exists a symplectic codimension $2$ submanifold $N$ which is Poincar\'{e} dual to $m[\omega]$ for a suitable multiple $m$. Let $\tau=\omega|_N$.

Let $SDB(N,\tau,m)$ be a standard symplectic disk bundle over $N$. This is a symplectic manifold whose underlying smooth manifold is the disk bundle over $N$ with Euler class $[m\tau] \in H^2(N,\ZZ)$ and which has a symplectic form restricting to $\tau$ on $N$ and with fibers of area $\frac{1}{m}$. As a symplectic manifold, $SDB(N,\tau,m)$ is well defined up to isotopy. For more details see \cite{bir0}, section $2$, or \cite{ops2}, section $1$. Given the above, we can now state the Biran decomposition.

\begin{theorem} [Theorem $1$, \cite{ops2},\cite{bir0}] \label{opst} There exists a full filling $SDB(N,\tau,m) \hookrightarrow (M,\omega)$.
\end{theorem}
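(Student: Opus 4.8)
The plan is to realize $(M,\omega)$, up to a subset of measure zero, as the interior of the standard disk bundle $SDB(N,\tau,m)$, following the Liouville-theoretic decomposition of Biran as refined by Opshtein. The starting point is already in hand: Donaldson's theorem supplies the symplectic hypersurface $N$, Poincar\'e dual to $m[\omega]$, together with $\tau=\omega|_N$, and the underlying smooth disk bundle has normal Euler class $[m\tau]$.

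First I would exploit that, since $N$ is Poincar\'e dual to $m[\omega]$, the restriction of $m[\omega]$ to the complement $M\setminus N$ vanishes in $H^2(M\setminus N;\RR)$. Hence $m\omega=d\alpha$ is exact there, and I can define a Liouville vector field $X$ by $\iota_X\omega=\tfrac{1}{m}\alpha$, so that $\mathcal{L}_X\omega=\omega$. The primitive $\alpha$ is far from unique, and the entire content of the theorem lies in choosing it so that $X$ has good global dynamics. Using the symplectic tubular neighborhood theorem I would first pin down $\alpha$ near $N$ so that $X$ points radially outward from $N$ and the induced local structure is exactly that of the standard disk bundle with fibers of symplectic area $1/m$; this fiber-area normalization is forced, since the normal Euler class of $N$ equals $m[\tau]$.

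Next I would extend this to a globally defined Liouville (Weinstein) structure on $M\setminus N$ and analyze the backward flow of $X$. The key claim is that this flow retracts $M\setminus N$ onto an isotropic CW-complex $\Delta$, its stable skeleton, which has measure zero. Since $X$ expands $\omega$, flowing the standard neighborhood model of $N$ outward along $X$ produces, for every $\lambda<1$, a symplectic embedding of $\lambda\, SDB(N,\tau,m)$ into $M\setminus\Delta\subset M$; this is precisely the meaning of $SDB(N,\tau,m)\hookrightarrow(M,\omega)$. Because $\Delta$ is measure zero, $\mathrm{vol}(M\setminus\Delta)=\mathrm{vol}(M)$, and since the Liouville flow preserves $\omega$ on the image the volumes match, so the embedding is volume-filling and the result is a full filling.

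The hardest part is exactly the assertion that $\alpha$, and hence $X$, can be chosen so that the flow is gradient-like with a \emph{well-controlled} stable skeleton that is both isotropic and of measure zero. A generic primitive will produce flow lines that do not retract cleanly onto a skeleton, and forcing isotropy of $\Delta$ requires adapting a compatible almost complex structure or a plurisubharmonic exhaustion to $N$ so that the negative Liouville flow is tame near its critical set. This global control of the Liouville dynamics is the technical core of Biran's decomposition and of Opshtein's refinement; by comparison, reconciling the local fiber-area $1/m$ with the abstract bundle $SDB(N,\tau,m)$ is routine bookkeeping. I would therefore organize the proof so that all the analytic work is concentrated in establishing the existence of the retracting skeleton, after which the volume-filling conclusion is immediate.
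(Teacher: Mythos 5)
The paper does not prove this statement: it is quoted as Theorem 1 of Opshtein \cite{ops2}, refining Biran \cite{bir0}, so there is no internal argument to compare against. Your sketch is a faithful outline of the strategy of those references---exactness of $\omega$ on the complement of the Donaldson hypersurface, a Liouville field retracting that complement onto an isotropic skeleton $\Delta$ of measure zero, and exhaustion of $M\setminus\Delta$ by symplectic images of $\lambda\,SDB(N,\tau,m)$---with the one slip that the Liouville flow rescales $\omega$ rather than preserving it; the volume-filling conclusion instead follows because the suitably rescaled flow maps are genuinely symplectic and their images exhaust the full-measure set $M\setminus\Delta$.
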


By our induction hypothesis, there exits $b_1, \dots ,b_{n-1}$ and a full filling $E(b_1,\dots ,b_{n-1}) \hookrightarrow (N,\tau)$. Now we apply the construction of Opshtein.

\begin{lemma} [Lemma $2.1$, \cite{opshtein}] \label{opss} A symplectic embedding $E(b_1,\dots ,b_{n-1}) \hookrightarrow (N,\tau)$ can be extended to a symplectic embedding $E(b_1,\dots ,b_{n-1},\frac{1}{m}) \hookrightarrow SDB(N,\tau,m)$.
\end{lemma}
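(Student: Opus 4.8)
The plan is to exhibit the extension fiberwise, using the Hamiltonian circle action that rotates the last complex coordinate of the ellipsoid and, on the target, the action that rotates the disk fibers of $SDB(N,\tau,m)$. Write $E := E(b_1,\dots,b_{n-1})$ and let $\phi: E \hookrightarrow (N,\tau)$ be the given embedding. On the domain $E(b_1,\dots,b_{n-1},\frac1m)$ the function $\mu(z) = \pi|z_n|^2$ is the moment map for rotation of $z_n$, taking values in $[0,\frac1m]$; its reduced space at level $t$ is the rescaled ellipsoid $E((1-mt)b_1,\dots,(1-mt)b_{n-1})$, since fixing $\pi|z_n|^2=t$ forces $\sum_{i<n}\frac{\pi|z_i|^2}{b_i}\le 1-mt$. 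On $SDB(N,\tau,m)$ the fiber-rotation action has moment map equal to the enclosed fiber area, again ranging over $[0,\frac1m]$, and its reduced space at level $t$ is $(N,(1-mt)\tau)$; this is exactly the Duistermaat--Heckman statement that the reduced class varies with derivative $-m[\tau]$, the curvature of the bundle realizing $c_1 = m[\tau]$. Thus both spaces are fibered by these $S^1$-actions over the \emph{same} interval of moment values.

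First I would check that the reduced embeddings are compatible. The linear rescaling $\sigma_\lambda(z)=\sqrt{\lambda}\,z$ satisfies $\sigma_\lambda^*\omega_0 = \lambda\omega_0$ and carries $E$ to $E(\lambda b_1,\dots,\lambda b_{n-1})$, so $\phi_t := \phi\circ\sigma_{1-mt}^{-1}$ is a symplectic embedding $E((1-mt)b_1,\dots,(1-mt)b_{n-1})\hookrightarrow (N,(1-mt)\tau)$ for every $t\in[0,\frac1m)$, with $\phi_0=\phi$ sending the core slice $\{z_n=0\}$ to the zero section. These $\phi_t$ are the candidate maps on reduced spaces; the task is to assemble them into a single $S^1$-equivariant symplectic map $\Phi$ of total spaces covering the identity on the moment interval.

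The key structural input is that $E$ is contractible, so the line bundle underlying $SDB(N,\tau,m)$ is \emph{topologically trivial} over $\phi(E)$, even though its connection still has curvature $m\tau\neq 0$. Triviality lets me fix a global section over $\phi(E)$ and hence global coordinates (base point, fiber angle $\theta'$, area $t$) in which $\omega_{SDB}=\pi^*\omega_{\mathrm{red},t}+\frac{1}{2\pi}dt\wedge d\theta' + (\text{connection term})$, with no monodromy in $\theta'$. On the domain, away from $\{z_n=0\}$, write $z_n=\sqrt{t/\pi}\,e^{i\theta}$ so that $\omega_0=\sum_{i<n}dx_i\wedge dy_i+\frac{1}{2\pi}dt\wedge d\theta$. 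I would then define $\Phi(w,\theta,t)=(\phi_t(w),\,\theta+\kappa(w,t),\,t)$, mapping base slices by $\phi_t$, preserving the moment value $t$, and introducing an angular shift $\kappa$ to absorb the connection term. That such a single-valued $\kappa$ exists, and that the resulting $\Phi$ satisfies $\Phi^*\omega_{SDB}=\omega_0$, follows because $\phi_t$ matches the reduced forms for all $t$ while the curvature of each $S^1$-bundle equals the $t$-derivative of its reduced form (Duistermaat--Heckman), so the two connection terms agree after pulling back by $\phi_t$; contractibility of $E$ guarantees the primitive defining $\kappa$ is globally defined.

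Finally I would extend $\Phi$ across the core $\{z_n=0\}$, which must map to the zero section $N$ by continuity since that is the $t=0$ reduced locus and $\phi_0=\phi$. Smoothness there is the one genuinely delicate point, as the angular coordinates degenerate where the circle action has fixed points; I would verify it either by a direct expansion in Darboux-type coordinates near the zero section, or by invoking the equivariant symplectic neighborhood theorem, which identifies a neighborhood of $\phi(E)$ in $SDB$ with a neighborhood of the core in the domain once the normal data (a trivial bundle with fiber area $\frac1m$ and curvature $m\tau$) is matched under $\phi$. I expect this extension and smoothness across the fixed-point locus to be the main obstacle; everything else is bookkeeping once the reduced embeddings $\phi_t$ and the Duistermaat--Heckman matching of connections are in place.
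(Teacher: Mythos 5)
The paper offers no proof of this statement: it is imported verbatim as Lemma~2.1 of Opshtein's \emph{Maximal symplectic packings of $\mathbb{P}^2$}, so there is nothing internal to compare against. Your argument is, in substance, a correct reconstruction of Opshtein's proof: trivialize the disk bundle over the contractible image $\phi(E)$, write $\omega_{SDB}=(1-mt)\pi^*\tau+dt\wedge\bigl(\tfrac{1}{2\pi}d\theta'+A\bigr)$ with $dA=-m\tau$, and use the fiberwise rescaling $w\mapsto w/\sqrt{1-mt}$ together with an angular gauge correction $\kappa$; the one computation you defer does close up, since the $dt$-coefficient to be absorbed is $(1-mt)\,\phi_t^*(\iota_{V_t}\tau)+\phi_t^*A$, whose $w$-differential is $\tfrac{m}{1-mt}\omega_0-\tfrac{m}{1-mt}\omega_0=0$, so $\kappa$ exists by contractibility. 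Two minor points: the identification of the reduced space with $(N,(1-mt)\tau)$ on the nose is by the explicit normal form defining $SDB(N,\tau,m)$ rather than by Duistermaat--Heckman (which only controls the cohomology class); and the extension across $\{z_n=0\}$ is less delicate than you fear, because $\phi_t(w)=\phi(w/\sqrt{1-mt})$ and $\kappa(w,t)$ are smooth in $t=\pi|z_n|^2$, so $z_n\mapsto e^{i\kappa(w,\pi|z_n|^2)}z_n$ is smooth at the origin.
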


If $E(b_1,\dots ,b_{n-1}) \hookrightarrow (N,\tau)$ is a full filling then the embedding $E(b_1,\dots ,b_{n-1},\frac{1}{m}) \hookrightarrow SDB(N,\tau,m)$ of Lemma \ref{opss} is also a full filling. Thus, combining Theorem \ref{opst} and Lemma \ref{opss} we have a full filling $E(b_1,\dots ,b_{n-1},\frac{1}{m}) \hookrightarrow (M,\omega)$ and we conclude the proof of Theorem \ref{sec4} by induction.

\section{Examples}

\subsection{$\CC P^n$}

Complex projective space $\CC P^n$ is fully filled by a ball. Recall also that $E(1,\dots ,1,k)$ can be fully filled by $k$ balls, see \cite{busehind}, Lemma $4.1$. Therefore we can establish our stability bound for $\CC P^n$ by producing an embedding $E(1,\dots ,1,k) \hookrightarrow B(k^{\frac{1}{n}})$ for all $k \ge (8\frac{1}{36})^{\frac{n}{2}}$.

We consider the following sequence of $n-1$ potential embeddings.

$$E(1,\dots ,1,k) \hookrightarrow E(k^{\frac{1}{n}},1,\dots ,1,k^{\frac{n-1}{n}})$$
$$\hookrightarrow E(k^{\frac{1}{n}},k^{\frac{1}{n}},1,\dots ,1,k^{\frac{n-2}{n}}) \hookrightarrow \dots \hookrightarrow E(k^{\frac{1}{n}},\dots ,k^{\frac{1}{n}},1,k^{\frac{2}{n}})$$
$$\hookrightarrow E(k^{\frac{1}{n}},\dots ,k^{\frac{1}{n}})=B(k^{\frac{1}{n}}).$$

The first $n-2$ embeddings will exist by Theorems \ref{one} and Proposition \ref{two} provided we have
\begin{equation}
k^{\frac{n-i}{n}} \ge \frac{(5k^{\frac{n-i-2}{n}}+ 16)^2}{16k^{\frac{n-i-2}{n}}} = \frac{25}{16}k^{\frac{n-i-2}{n}} + 10 + 16k^{\frac{-(n-i-2)}{n}}
\end{equation}
for all $0 \le i \le n-3$.

For the final embedding, as the target is a ball, we get an improved bound by appealing directly to \cite{mcdsch}, Theorem $1.1.2$ (iv). This says that $E(1,k^{\frac{2}{n}}) \hookrightarrow B(k^{\frac{1}{n}})$ whenever $k^{\frac{2}{n}} \ge 8\frac{1}{36}$. Therefore by Proposition \ref{two} the final embedding will also exist provided $k^{\frac{2}{n}} \ge 8\frac{1}{36}$.

For the first $n-2$ embeddings we require

\begin{equation}\label{kineq}
\frac{25}{16}k^{\frac{-2}{n}} + 10 k^{\frac{-(n-i)}{n}} + 16 k^{\frac{-2(n-i-1)}{n}} \le 1
\end{equation}

 for all $0 \le i \le n-3$. The left hand side is an increasing function of $i$, so it suffices that $$\frac{25}{16}k^{\frac{-2}{n}} + 10 k^{\frac{-3}{n}} + 16 k^{\frac{-4}{n}} \le 1.$$
A calculation shows that this also holds when $k^{\frac{2}{n}} \ge 8\frac{1}{36}$ and so Theorem \ref{bound} $(i)$ is proved. \qed

There is in fact a better bound valid in dimension $6$.

\begin{proposition}
$N_{{\rm stab}}(\CC P^3) \le 21$
\end{proposition}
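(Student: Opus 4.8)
The plan is to go beyond the thin-ellipsoid method of Section $5$ and pack $\CC P^3$ by balls directly, using the polarization and extension results of Theorems \ref{opst} and \ref{sec4} and Lemma \ref{opss}. First I would record that Theorem \ref{bound}(i) already yields $p_k(\CC P^3)=1$ for every $k\ge 23$, since $\lceil(8\tfrac1{36})^{3/2}\rceil=\lceil 4913/216\rceil=23$; so the whole content of the proposition is to exhibit full packings of $\CC P^3$ by exactly $21$ and by $22$ equal balls. I would also note at the outset why these two values are out of reach of the earlier method: any full filling $E(1,1,k)\hookrightarrow B(k^{1/3})$ built from four-dimensional embeddings via Proposition \ref{two} must terminate in a four-dimensional embedding of a pair of factors into a round ball. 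By the sharp threshold of \cite{mcdsch} that pair must have ratio at least $8\tfrac1{36}$ and product $k^{2/3}$, while monotonicity of the first ECH capacity (the Gromov width) forbids any intermediate factor from dropping below $1$; together these force $k^{2/3}\ge 8\tfrac1{36}$, i.e. $k\ge 23$. Hence a genuinely six-dimensional construction is needed for $k=21,22$.

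Next I would build the two packings by lifting four-dimensional ones. Polarizing $\CC P^3$ along a smooth hypersurface $N$ and invoking Theorem \ref{opst} together with Lemma \ref{opss} applied to a disjoint union, a full packing $\sqcup_j E(b_1,b_2)\hookrightarrow(N,\tau)$ lifts to a full packing $\sqcup_j E(b_1,b_2,\tfrac1m)\hookrightarrow\CC P^3$. If the pieces are chosen so that each lifted ellipsoid $E(b_1,b_2,\tfrac1m)$ is proportional to some $E(1,1,\ell)$, then each is fully filled by $\ell$ equal balls (\cite{busehind}, Lemma $4.1$), and $\CC P^3$ is packed by $j\ell$ equal balls. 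Taking $N=\CC P^2$ and feeding in Biran's four-dimensional packings \cite{Bi2}, the task reduces to realizing the factorizations $21=3\cdot 7$ and $22=2\cdot 11$ through admissible pairs $(j,\ell)$ and an admissible polarization degree.

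The hard part will be the rigidity of the fibre scale. Opshtein's extension always appends the factor $\tfrac1m$, so a single degree-$m$ polarization with balls matched to the fibre produces $m^3$ equal balls, a perfect cube, and neither $21$ nor $22$ is a cube; moreover the required balls have capacity $k^{-1/3}$, which exceeds the Gromov width of every ``thin'' filling ellipsoid, forcing the filling ellipsoid to stay nearly round. To break this rigidity I would iterate the polarization, polarizing $\CC P^3$ along $H^2_{m_1}$ and then $H^2_{m_1}$ along a curve of degree $m_2$, which fully fills $\CC P^3$ by the more flexible ellipsoid $E(m_1m_2,\tfrac1{m_2},\tfrac1{m_1})$, and combine this with the sharp four-dimensional packing data for $\CC P^2$ and the intermediate surfaces (as in the proofs of Theorems \ref{main} and \ref{stable2}). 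The decisive and most delicate step, which I expect to carry the improvement from $23$ down to $21$, is to verify that some admissible choice of polarization data yields a filling ellipsoid of Gromov width at least $k^{-1/3}$ that is nonetheless fully filled by exactly $21$ (resp. $22$) equal balls.
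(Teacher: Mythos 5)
There is a genuine gap, and it occurs at the very first step: your claim that the thin-ellipsoid method of Section $5$ cannot reach $k=21,22$ is false, and it rests on a misreading of the McDuff--Schlenk threshold. The constant $8\tfrac1{36}$ is the infimum of those $\alpha_0$ for which $f_1(\alpha)=\sqrt{\alpha}$ holds for \emph{all} $\alpha\ge\alpha_0$; it is not a lower bound for the set of individual $\alpha$ at which $f_1(\alpha)=\sqrt{\alpha}$. Theorem $5.2.3$ and Table $5.2$ of \cite{mcdsch} show that $f_1(\alpha)=\sqrt{\alpha}$ already holds on all of $[7\tfrac19,8]$ with the exception of eight explicitly listed intervals, and the two numbers $21^{2/3}\approx 7.612$ and $22^{2/3}\approx 7.851$ avoid those intervals. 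Hence $E(1,k^{2/3})\hookrightarrow B(k^{1/3})$ does exist for $k=21,22$, your ``sharp threshold'' obstruction evaporates, and the paper's proof is exactly the two-step chain
$E(1,1,k)\hookrightarrow E(k^{1/3},1,k^{2/3})\hookrightarrow B(k^{1/3})$,
checking directly that inequality \eqref{kineq} still holds at $k=21,22$ for the first embedding and invoking Proposition \ref{two} together with the McDuff--Schlenk datum for the second. (Your auxiliary ``rigidity'' assertion --- that monotonicity of the Gromov width forces the terminal four-dimensional pair to have ratio at least $8\tfrac1{36}$ --- is therefore also unfounded.)

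Even setting that aside, your alternative route does not constitute a proof. The iterated-polarization construction is only sketched, and you explicitly defer ``the decisive and most delicate step'' --- producing polarization data whose filling ellipsoid has Gromov width at least $k^{-1/3}$ and is nonetheless fully filled by exactly $21$ (resp.\ $22$) equal balls --- without verifying it. By your own observation a single polarization with balls matched to the fibre yields only perfect cubes of balls, so the entire burden of realizing $21=3\cdot 7$ and $22=2\cdot 11$ sits in that unverified step, and no candidate polarization degrees or four-dimensional packing inputs are exhibited. As written, the argument establishes nothing beyond the bound $N_{{\rm stab}}(\CC P^3)\le 23$ already contained in Theorem \ref{bound}(i).
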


\begin{proof}
Part $(i)$ of Theorem \ref{bound} already implies that $N_{{\rm stab}}(\CC P^3) \le 23.$
Thus we just need to check that the same embeddings exist when $k=21$ and $k=22$. Direct computation shows that the inequality \ref{kineq} continues to hold for these $k$, so we are reduced to showing $E(1,k^{\frac{2}{3}}) \hookrightarrow B(k^{\frac{1}{3}})$ when $k=21, 22$.
We appeal again to McDuff-Schlenk's results from \cite{mcdsch}, Theorem $5.2.3$ and its corresponding Table $5.2$. This says that $f_1(\alpha)=\sqrt{\alpha}$ on the interval $[ 7 \frac{1}{9}, 8 ]$ with the exception of eight intervals that are explicitly provided in their statement. The numbers $k^{\frac{2}{3}}$ for $k=21,22$ do not lie in these intervals, thus this embedding also exists and hence the whole sequence of embeddings provided in the proof of Theorem \ref{bound}.
\end{proof}

\subsection{$H^n_d$}

We first make the following observation.

{\it Fact.} $H^n_d$ embeds as a polarizing hypersurface in $H^{n+1}_d$ Poincar\'{e} dual to the symplectic form.

Indeed, $H^n_d = H^{n+1}_d \cap \CC P^{n+1}$ where $H^{n+1}_d \subset \CC P^{n+2}$ and we think of $\CC P^{n+1}$ as a hyperplane intersecting $H^{n+1}_d$ transversely.

Now, $H^1_d$ is a curve of degree $d$ and so is fully filled by $B^2(d)$, normalizing the Fubini-Study form on the $\CC P^n$ so that lines have area $1$. Hence, combining the above fact with Lemma \ref{opss} and arguing by induction we see that $E(1,\dots ,1,d)$ fully fills $H^n_d$.

Therefore, arguing as in case $(i)$ by applying Lemma $4.1$ of \cite{busehind}, to demonstrate Theorem \ref{bound} $(ii)$ it suffices to show that $$E(1,\dots ,1,k) \hookrightarrow \frac{k^{\frac{1}{n}}}{d^{\frac{1}{n}}}E(1,\dots ,1,d)$$ for all integers $k \ge (\frac{25}{16}d+10d^{-\frac{(n-2)}{n}}+16d^{-\frac{2(n-1)}{n}})^{\frac{n}{2}}$.

We now consider the sequence of $(n-1)$ potential embeddings
$$E(1,\dots ,1,k) \hookrightarrow E(\frac{k^{\frac{1}{n}}}{d^{\frac{1}{n}}}, 1, \dots, 1, k^{\frac{n-1}{n}}d^{\frac{1}{n}}) \hookrightarrow$$
$$E(\frac{k^{\frac{1}{n}}}{d^{\frac{1}{n}}}, \frac{k^{\frac{1}{n}}}{d^{\frac{1}{n}}}, 1, \dots, 1, k^{\frac{n-2}{n}}d^{\frac{2}{n}}) \hookrightarrow \dots \hookrightarrow E(\frac{k^{\frac{1}{n}}}{d^{\frac{1}{n}}}, \dots, \frac{k^{\frac{1}{n}}}{d^{\frac{1}{n}}}, k^{\frac{1}{n}}d^{\frac{n-1}{n}})$$ $$=\frac{k^{\frac{1}{n}}}{d^{\frac{1}{n}}}E(1,\dots ,1,d).$$

By Proposition \ref{two} the $(i+1)$st embedding exists provided there exists an embedding $$E(1,k^{\frac{n-i}{n}}d^{\frac{i}{n}}) \hookrightarrow E(\frac{k^{\frac{1}{n}}}{d^{\frac{1}{n}}}, k^{\frac{n-i-1}{n}}d^{\frac{i+1}{n}})$$ which by Theorem \ref{one} exists provided
$$k^{\frac{n-i}{n}}d^{\frac{i}{n}} \ge \frac{25}{16}k^{\frac{n-i-2}{n}}d^{\frac{i+2}{n}}+ 10 + \frac{16}{k^{\frac{n-i-2}{n}}d^{\frac{i+2}{n}}}$$
or equivalently
$$\frac{25}{16}k^{\frac{-2}{n}}d^{\frac{i+2}{n}} + 10k^{-\frac{(n-i)}{n}}d^{\frac{-i}{n}} + 16 k^{-2\frac{(n-i-1)}{n}}d^{-2\frac{(i+1)}{n}} \le 1.$$
For $k \ge d$ and $n$ fixed, the left hand side of this last inequality is an increasing function of $i$. Therefore it suffices to check it when $i=n-2$. In this case we get
$$\frac{25}{16}k^{\frac{-2}{n}}d+ 10k^{\frac{-2}{n}}d^{-\frac{(n-2)}{n}} + 16k^{\frac{-2}{n}}d^{-2\frac{(n-1)}{n}} \le 1$$ which is equivalent to $$k^{\frac{2}{n}} \ge \frac{25}{16}d+10d^{-\frac{(n-2)}{n}}+16d^{-\frac{2(n-1)}{n}}$$ and so holds in our range as required.

\begin{remark}
Still following our methods here as well as those from \cite{busehind}, the estimates for the upper bound on the stability numbers presented here may be improved if we can improve,  by a much more complicated number theoretical investigation of the ECH vectors,  the four dimensional embedding result from Theorem \ref{oneemb}. However, in the case when the target manifold is $\CC P^3$ the inequality $N_{stab}  \leq 21$ cannot be improved in this way as it relies at the last step on a sharp estimate from McDuff- Schlenk  giving the precise range for volume filling ellipsoids in the unit ball. It remains a very interesting question to find methods to obstruct symplectic volume fillings  of ellipsoids or disjoint balls into the unit six dimensional ball. As will be shown in \cite{busehindprep}, it appears that obstructions arising from symplectic field theory that have been succesful in \cite{hindkerman} when targets were cylinders fail here.
\end{remark}

\end{document}